\newtheorem{proposition}{Proposition}
\newtheorem{theorem}[proposition]{Theorem}
\newtheorem{lemma}[proposition]{Lemma}
\newtheorem{corollary}[proposition]{Corollary}
\newcommand{\cst}{\ensuremath{\mathrm{C}^*}}
\newcommand{\ii}{\mathrm{i}}
\newcommand{\comp}{\circ}
\newcommand{\I}{\mathds{1}}
\newcommand{\tens}{\otimes}
\newcommand{\hh}[1]{\widehat{#1}}
\renewcommand{\Bar}[1]{\overline{#1}}
\renewcommand{\tt}{\scriptscriptstyle{\mathsf{T}}}
\newcommand{\ph}{\varphi}
\newcommand{\id}{\mathrm{id}}
\newcommand{\GG}{\mathbb{G}}
\newcommand{\ZZ}{\mathbb{Z}}
\newcommand{\TT}{\mathbb{T}}
\newcommand{\KK}{\mathbb{K}}
\newcommand{\CC}{\mathbb{C}}
\newcommand{\RR}{\mathbb{R}}
\newcommand{\UU}{\mathbb{U}}
\newcommand{\HH}{\mathbb{H}}
\newcommand{\bh}{\boldsymbol{h}}
\newcommand{\bchi}{\boldsymbol{\upchi}}
\newcommand{\cT}{\mathcal{T}}
\newcommand{\cK}{\mathcal{K}}
\newcommand{\cQ}{\mathcal{Q}}
\newcommand{\cH}{\mathcal{H}}
\newcommand{\sN}{\mathsf{N}}
\newcommand{\sK}{\mathsf{K}}
\newcommand{\sH}{\mathsf{H}}
\DeclareMathOperator{\C}{C}
\DeclareMathOperator{\B}{B}
\DeclareMathOperator{\Pol}{Pol}
\DeclareMathOperator{\Irr}{Irr}
\DeclareMathOperator{\Ltwo}{\mathnormal{L}_2}
\DeclareMathOperator{\Linf}{\mathnormal{L}_\infty}
\DeclareMathOperator{\HS}{HS}
\DeclareMathOperator{\Tr}{Tr}
\DeclareMathOperator{\spec}{Sp}
\newcommand{\uu}{\text{\tiny{u}}}
\renewcommand{\ll}{\text{\tiny{L}}}
\renewcommand{\ss}{\mathnormal{s}}
\newcommand{\wW}{\text{\reflectbox{$\mathds{W}$}}\:\!} % requires graphicx, only works in pdf
\begin{document}

\author{Jacek Krajczok}
\address{Institute of Mathematics of the Polish Academy of Sciences, Warsaw, Poland}
\email{jkrajczok@impan.pl}

\author{Piotr M.~So{\l}tan}
\address{Department of Mathematical Methods in Physics, Faculty of Physics, University of Warsaw, Poland}
\email{piotr.soltan@fuw.edu.pl}

\begin{abstract}
We show that the quantum disk, i.e.~the quantum space corresponding to the Toeplitz $\mathrm{C}^*$-algebra does not admit any compact quantum group structure. We prove that if such a structure existed the resulting compact quantum group would simultaneously be of Kac type and not of Kac type. The main tools used in the solution come from the theory of type $\mathrm{I}$ locally compact quantum groups, but also from the theory of operators on Hilbert spaces.
\end{abstract}

% \title[short title]{The quantum disk is not a quantum group}
\title{The quantum disk is not a quantum group}

\keywords{Compact quantum group, Quantum space, Non-commutative topology}

\subjclass[2010]{Primary 46L89, 46L85, 20G42}

% 46L89 Other "noncommutative'' mathematics based on $C^*$-algebra theory
% 46L85 Noncommutative topology
% 20G42 Quantum groups (quantized function algebras) and their representations

\maketitle

\section{Introduction}

It is well known that some important topological spaces do not admit a structure of a topological group. This is the case, for example, for all spheres of dimension different from $0$, $1$ and $3$. The proof of this result is surprisingly non-trivial as it involves various techniques of algebraic topology (cf.~\cite[Section B.4]{hatcher}, see also \cite{short} for a concise account of this result and a short proof that even-dimensional spheres are not topological groups).

An analogous question can be asked about \emph{quantum spaces}, which are ``virtual'' objects corresponding to non-commutative \cst-algebras under an extension of Gelfand's duality. This question has been addressed in \cite{without,when} for some well known quantum spaces. In particular it turns out that the quantum torus (\cite{connes,rieffel}) does not admit a structure of a compact quantum group, while its ``two fold covering'' does (\cite{qdt}). Also a number of well known quantum spheres are not compact quantum groups. Moreover the quantum space of all continuous maps from the quantum space underlying the \cst-algebra $M_2(\CC)$ into the classical two-point space does not admit a compact quantum group structure -- like the case of quantum tori, this too is in stark contrast to the classical situation (\cite[Section 5]{qmqs}).

In this paper we show that the quantum disk $\UU$ introduced in \cite{KLqdisk} (and denoted there by $U_\mu$) does not admit a structure of a compact quantum group. The quantum disk is a very well studied quantum space (see e.g.~\cite{HaMaSzym,SchmuWag,BrzDab,Qudd}). The \cst-algebra $\C(\UU)$ playing the role of the algebra of continuous functions on the quantum disk was shown in \cite[Theorem III.5]{KLqdisk} to be isomorphic to the \emph{Toeplitz algebra} $\cT$, i.e.~the \cst-algebra of operators on $\ell_2(\ZZ_+)$ generated by the operator $\boldsymbol{\mathsf{s}}\in\B(\ell_2(\ZZ_+))$ which shifts the standard basis $(e_n)_{n\in\ZZ_+}$: $\boldsymbol{\mathsf{s}}e_n=e_{n+1}$.

It is impossible to overestimate the importance of the Toeplitz algebra for non-commutative geometry (particularly $K$-theory, cf.~e.g.~\cite[Chapter 4]{CuMeRo}). The features of $\cT$ important for our purposes are that $\cT$ contains the ideal $\cK$ of compact operators on $\ell_2(\ZZ_+)$ and we have the short exact sequence
\[
\xymatrix{0\ar[r]&\cK\ar[r]&\cT\ar[r]^(.43)\ss&\C(\TT)\ar[r]&0},
\]
where $\C(\TT)$ is the algebra of all continuous function on the circle (\cite[Chapter V]{Davidson}). Moreover $\cK$ is an essential ideal in $\cT$ (cf.~\cite[Theorem 1]{Coburn} or \cite[Exercise 3.H]{WeO}) and since $\cK$ is a simple \cst-algebra, this means that $\cK$ is contained in any non-trivial ideal of $\cT$. We will also be using the fact that $\cT$ is a type $\mathrm{I}$ \cst-algebra (see e.g.~\cite[Section IV.1]{blackadar}) and its irreducible representations are indexed by $\{\bullet\}\cup\TT$, where the distinguished point corresponds to the identity representation on $\ell_2(\ZZ_+)$ and the remaining representations are one-dimensional -- given by compositions of the ``symbol map'' $\ss\colon\cT\to\C(\TT)$ and evaluations in the points of $\TT$. For $\lambda\in\TT$ we will denote the corresponding one-dimensional representation (character) of $\cT$ by $\ss_\lambda$.

We will be using the theory of compact and locally compact quantum groups. We refer the reader to the excellent book \cite{NTbook} and the original expositions in \cite{pseudogroups,cqg} for the details of the theory of compact quantum groups and to \cite{kv,VanDaele} for the theory of locally compact quantum groups. Our strategy is to assume that there exists a compact quantum group $\GG$ such that $\C(\GG)$ is isomorphic to $\cT$ and arrive at a contradiction. More precisely, the contradiction will come from the simple observation that $\GG$ cannot be of Kac type (Subsection \ref{notKac}) and the fact that it also is of Kac type (Section \ref{KacType}).

Our main technical tools will come from the theory of so called \emph{type $\mathrm{I}$ locally compact quantum groups}. We will recall the necessary and most important results of this theory in Section \ref{typeI}, but refer the reader to more detailed expositions \cite{DesmedtPhd,JKcoam,modular}.

For a possibly unbounded closed densely defined operator $T$ on a Hilbert space $\cH$ we will use the symbol $\spec(T)$ to denote the spectrum of $T$. Moreover we will use the notation from e.g.~\cite[Page 10]{so3} according to which $\cH(T=q)$ will denote the eigenspace of $T$ for the eigenvalue $q$ (or $\{0\}$ if $q$ is not an eigenvalue of $T$) and $\bchi(T=q)$ will stand for the orthogonal projection onto $\cH(T=q)$.

As we already explained we will work under the assumption that $\GG$ is a compact quantum group with $\C(\GG)$ isomorphic to $\cT$. The isomorphism $\C(\GG)\to\cT$ will be denoted by $\pi_{\bullet}$ while the remaining irreducible representations are $\{\ss_\lambda\comp\pi_\bullet\}_{\lambda\in\TT}$. We also let $\bh$ denote the Haar measure of $\GG$.

\section{First observations}\label{first}

In this section we discuss the properties of the compact quantum group $\GG$ which are easily obtainable from the basic knowledge about the structure of $\C(\GG)$. Some of these considerations have already appeared in \cite[Section 6]{when}. For this we let $\sK=\pi_\bullet^{-1}(\cK)$. Then $\sK$ is an ideal in $\C(\GG)$ which is contained in any non-trivial ideal and clearly $\sK$ is isomorphic to the algebra of compact operators on a separable Hilbert space.

\subsection{The Haar measure is faithful} The left kernel
\[
\sN=\bigl\{a\in\C(\GG)\,\bigr|\bigl.\,\bh(a^*a)=0\bigr\}
\]
of the Haar measure of $\GG$ is a closed two-sided ideal (\cite[Page 656]{pseudogroups}), so if it were not trivial, it would contain the ideal $\sK$. But then $\C(\GG)/\sN$ would be commutative which is impossible because both $\C(\GG)$ and $\C(\GG)/\sN$ are completions of the same $*$-algebra $\Pol(\GG)$ (\cite{coamen}), so one is commutative if and only if so is the other. It follows that $\sN=\{0\}$, i.e.~the Haar measure of $\GG$ is faithful.

\subsection{$\GG$ is co-amenable}

Co-amenability is a property of compact quantum groups described first in \cite{coamen}. One of its possible characterizations is that a compact quantum group $\KK$ is co-amenable if its Haar measure is faithful and $\C(\KK)$ admits a character. Since $\C(\GG)$ clearly admits many characters and we just showed above that $\bh$ is faithful, we see that $\GG$ is co-amenable. As a consequence (cf.~\cite{coamen}) $\C(\GG)$ is the universal enveloping \cst-algebra of $\Pol(\GG)$.

\subsection{$\GG$ is not of Kac type}\label{notKac}

A compact quantum group is of \emph{Kac type} if its Haar measure is a trace. This is clearly not the case with $\GG$ because the Haar measure $\bh$ is faithful, but since the ideal $\sK$ is spanned by commutators (\cite[Theorem 1]{PeaTopp}) any trace on $\C(\GG)$ must vanish on $\sK$ which would make it non-faithful.

One of the characterizations of compact quantum groups which are not of Kac type is that the family $\{f_z\}_{z\in\CC}$ of \emph{Woronowicz characters} of $\Pol(\GG)$ (see \cite[Definition 1.7.1]{NTbook}, cf.~\cite[Theorem 5.6]{pseudogroups}) is non-trivial. Moreover, since $z\mapsto{f_z}$ is holomorphic in an appropriate sense (\cite[Theorem 5.6]{pseudogroups}), the subfamily $\{f_{\ii{t}}\}_{t\in\RR}$ is also non-trivial.

It is known, that $\{f_{\ii{t}}\}_{t\in\RR}$ are $*$-characters of $\Pol(\GG)$, so since $\C(\GG)$ is the universal enveloping \cst-algebra, these functionals extend to a non-trivial continuous family of characters of $\C(\GG)$. Moreover this family of functionals forms a non-trivial compact group $F_{\text{\tiny{W}}}$ in the weak${}^*$-topology of $\C(\GG)^*$ which is a continuous image of $\RR$ under the map $t\mapsto{f_{\ii{t}}}$. Furthermore $F_{\text{\tiny{W}}}$ is a subgroup of the group $\widetilde{\GG}$ of all characters of $\C(\GG)$ (cf.~\cite[Theorem 3.12]{Gtilde}). But as a topological space the latter is simply $\TT$. So the group $F_{\text{\tiny{W}}}$ is a non-trivial connected compact subgroup of the group $\widetilde{\GG}$ which is topologically a circle. It follows that $F_{\text{\tiny{W}}}$ must be equal to all of $\widetilde{\GG}$. Since $\pi_\bullet$ is an isomorphism we have
\[
\widetilde{\GG}=\bigl\{\ss_\lambda\comp\pi_\bullet\,\bigr|\bigl.\,\lambda\in\TT\bigr\}
\]
and consequently there is a surjective mapping $\RR\ni{t}\mapsto\lambda(t)\in\TT$ such that $f_{\ii{t}}\comp\pi_\bullet^{-1}=\ss_{\lambda(t)}$.

\begin{proposition}\label{pierwsze}
An operator $A\in\cT$ is compact if and only if $f_{\ii{t}}(\pi_\bullet^{-1}(A))=0$ for all $t\in\RR$ .
\end{proposition}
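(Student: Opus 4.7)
The plan is to exploit the identification, established just before the proposition, of the Woronowicz characters $\{f_{\ii{t}}\}_{t\in\RR}$ (transported to $\cT$ via $\pi_\bullet^{-1}$) with the one-dimensional representations of $\cT$. Recall that by the construction of $\lambda\colon\RR\to\TT$ we have $f_{\ii{t}}\comp\pi_\bullet^{-1}=\ss_{\lambda(t)}$ for every $t\in\RR$, and that the map $t\mapsto\lambda(t)$ is surjective onto $\TT$. So the condition ``$f_{\ii{t}}(\pi_\bullet^{-1}(A))=0$ for all $t\in\RR$'' is literally the same as ``$\ss_\lambda(A)=0$ for all $\lambda\in\TT$''.

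For the forward implication, I would simply note that each character $\ss_\lambda$ factors through the symbol map $\ss\colon\cT\to\C(\TT)$, and $\cK=\ker\ss$; hence any compact operator $A$ lies in the kernel of every $\ss_\lambda$, and by the identification above, in the kernel of every $f_{\ii{t}}\comp\pi_\bullet^{-1}$.

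For the converse, suppose $\ss_\lambda(A)=0$ for all $\lambda\in\TT$. Since $\ss_\lambda(A)$ is by definition the value $\ss(A)(\lambda)$ of the continuous function $\ss(A)\in\C(\TT)$ at $\lambda$, vanishing at every point forces $\ss(A)=0$, i.e.~$A\in\ker\ss=\cK$, so $A$ is compact.

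There is no real obstacle here: the proposition is an immediate consequence of the description of $\widetilde{\GG}$ and the identification $F_{\text{\tiny{W}}}=\widetilde{\GG}$ obtained in Subsection \ref{notKac}, together with the elementary fact that a continuous function on $\TT$ vanishing at every point is zero. The only point worth emphasizing in the write-up is the surjectivity of $t\mapsto\lambda(t)$, which is what allows one to convert a condition indexed by $t\in\RR$ into a condition indexed by $\lambda\in\TT$.
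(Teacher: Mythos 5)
Your proof is correct and follows essentially the same route as the paper's: both rest on the identification $f_{\ii{t}}\comp\pi_\bullet^{-1}=\ss_{\lambda(t)}$ with $t\mapsto\lambda(t)$ surjective, together with the fact that $A\in\cK$ if and only if $\ss_\lambda(A)=0$ for all $\lambda\in\TT$. You merely spell out the latter equivalence (via $\cK=\ker\ss$ and pointwise vanishing of $\ss(A)$) where the paper states it as clear.
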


\begin{proof}
Clearly $A\in\cK$ if and only if $\ss_\lambda(A)=0$ for all $\lambda$, so by the remarks above, $A$ is compact if and only if $A$ is mapped to zero by $f_{\ii{t}}\comp\pi_\bullet^{-1}$ for all $t\in\RR$.
\end{proof}

Following the standard conventions we let $\Irr(\GG)$ denote the set of equivalence classes of irreducible representations of $\GG$ and for each $\alpha\in\Irr(\GG)$ we fix a unitary representation $U^\alpha$ in the class $\alpha$ acting on a Hilbert space $\sH_\alpha$ of (finite) dimension $n_\alpha$. Furthermore we let $\uprho_\alpha$ denote the positive operator on $\sH_\alpha$ encoding the modular properties of $\bh$ (\cite[Theorem 5.4]{pseudogroups}, \cite[Theorem 1.4.4]{NTbook}). Furthermore for each $\alpha$ we fix an orthonormal basis of $\sH_\alpha$ in which the matrix of $\uprho_\alpha$ is diagonal. Finally we let $U^\alpha_{i,j}$ be the matrix elements of $U$ with respect to this basis.

\begin{proposition}
For any $\alpha\in\Irr(\GG)$ and $i,j\in\{1,\dotsc,n_\alpha\}$ the operator $\pi_\bullet(U^\alpha_{i,j})$ is compact if and only if $i\neq{j}$. Moreover $\pi_\bullet(U^\alpha_{i,i})$ is a Fredholm operator.
\end{proposition}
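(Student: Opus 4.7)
The plan is to leverage Proposition \ref{pierwsze} and the standard formula relating Woronowicz characters to the operators $\uprho_\alpha$. Recall that the defining relation of $f_z$ is $(\id\tens f_z)(U^\alpha)=\uprho_\alpha^z$, which in matrix form reads $f_z(U^\alpha_{i,j})=\langle e_i,\uprho_\alpha^z e_j\rangle$ for the fixed orthonormal basis $(e_i)$ of $\sH_\alpha$. Since this basis was chosen to diagonalize $\uprho_\alpha$, and $\uprho_\alpha$ is positive with (strictly positive) eigenvalues $\lambda_1,\dotsc,\lambda_{n_\alpha}$, we get
\[
f_z(U^\alpha_{i,j})=\delta_{i,j}\,\lambda_i^z.
\]
In particular, for $i\neq j$ we have $f_{\ii{t}}(U^\alpha_{i,j})=0$ for every $t\in\RR$, so Proposition \ref{pierwsze} immediately yields that $\pi_\bullet(U^\alpha_{i,j})$ is compact. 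Conversely, for $i=j$, $f_{\ii{t}}(U^\alpha_{i,i})=\lambda_i^{\ii t}$ has modulus $1$ and is therefore nowhere zero, so Proposition \ref{pierwsze} forbids $\pi_\bullet(U^\alpha_{i,i})$ from being compact. This takes care of the first assertion.

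For the Fredholm claim, I would pass to the short exact sequence $0\to\cK\to\cT\xrightarrow{\ss}\C(\TT)\to 0$. An element $A\in\cT$ is Fredholm precisely when its image in the Calkin algebra is invertible; since $\cK$ is contained in the ideal of compact operators on $\ell_2(\ZZ_+)$, this is equivalent to $\ss(A)\in\C(\TT)$ being invertible, i.e.\ nowhere zero on $\TT$. Recalling that $\ss_\lambda\comp\pi_\bullet$ ranges exactly over $\{f_{\ii t}\}_{t\in\RR}$ via the surjection $t\mapsto\lambda(t)$ established above, I compute
\[
\ss_{\lambda(t)}\bigl(\pi_\bullet(U^\alpha_{i,i})\bigr)=f_{\ii t}(U^\alpha_{i,i})=\lambda_i^{\ii t},
\]
which has modulus one at every point of $\TT$ and is in particular never zero. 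Hence $\ss(\pi_\bullet(U^\alpha_{i,i}))$ is invertible in $\C(\TT)$ and $\pi_\bullet(U^\alpha_{i,i})$ is Fredholm.

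No serious obstacle is expected: everything boils down to the diagonalization of $\uprho_\alpha$ in the chosen basis together with Proposition \ref{pierwsze}. The only mild care is to invoke correctly the identification $\ss_{\lambda(t)}=f_{\ii t}\comp\pi_\bullet^{-1}$ established at the end of Subsection \ref{notKac}, so that the symbol of $\pi_\bullet(U^\alpha_{i,i})$ in $\C(\TT)$ really is the function of values $\lambda_i^{\ii t}$ and not merely a function that vanishes nowhere on a dense subset.
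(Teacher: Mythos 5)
Your proof is correct, and the first assertion is handled exactly as in the paper: the formula $f_{\ii t}(U^\alpha_{i,j})=\delta_{i,j}(\uprho_{\alpha,i})^{\ii t}$ in the diagonalizing basis, combined with Proposition \ref{pierwsze}, gives compactness for $i\neq j$ and rules it out for $i=j$ since $(\uprho_{\alpha,i})^{\ii t}$ never vanishes.

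For the Fredholm claim you take a genuinely different, though closely related, route. The paper never evaluates the symbol pointwise: it applies $\ss\comp\pi_\bullet$ to the unitarity relation $\sum_{j}U^\alpha_{i,j}{U^\alpha_{i,j}}^*=\I$, observes that all off-diagonal terms are killed because they are compact, and concludes directly that $\ss\bigl(\pi_\bullet(U^\alpha_{i,i})\bigr)$ is a unitary element of $\C(\TT)$, hence invertible. You instead use the identification $\ss_{\lambda(t)}=f_{\ii t}\comp\pi_\bullet^{-1}$ together with the surjectivity of $t\mapsto\lambda(t)$ to see that the symbol has modulus one at every point of $\TT$; you are right to flag that surjectivity (not mere density) is the point, and that input is available since it was already needed for Proposition \ref{pierwsze}. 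Both arguments arrive at the same intermediate fact (a unimodular symbol), but the paper's version is slightly more economical in that it uses only the unitarity of $U^\alpha$ and the just-established compactness of the off-diagonal entries, with no further appeal to the structure of the character group. One step you should make explicit is the passage from invertibility of $\ss(A)$ in $\C(\TT)\cong\cT/\cK$ to invertibility of the image of $A$ in the Calkin algebra of $\ell_2(\ZZ_+)$; this is the easy direction (the unital inclusion $\cT/\cK\hookrightarrow\B(\ell_2(\ZZ_+))/\cK$ carries invertibles to invertibles), so there is no gap, only a sentence to add.
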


\begin{proof}
Let $\uprho_{\alpha,1},\dotsc,\uprho_{\alpha,n_\alpha}$ be the eigenvalues of $\uprho_\alpha$, so that
\[
\uprho_\alpha=\operatorname{diag}(\uprho_{\alpha,1},\dotsc,\uprho_{\alpha,n_\alpha})
\]
in the fixed basis of $\sH_\alpha$. Then by the definition of the Woronowicz characters (\cite[Theorem 5.6]{pseudogroups}, \cite[Definition 1.7.1]{NTbook}) we have
\[
f_{\ii{t}}(U^\alpha_{i,j})=\delta_{i,j}(\uprho_{\alpha,i})^{\ii{t}},\qquad{t}\in\RR
\]
and the first part of the proposition follows immediately from Proposition \ref{pierwsze}. For the second part note that from the unitarity of $U^\alpha$:
\[
\sum_{j=1}^{n_\alpha}U^\alpha_{i,j}{U^\alpha_{i,j}}^*=\I
\]
and from the fact that $\pi_\bullet(U^\alpha_{i,j})$ are compact for $i\neq{j}$ it follows that $\ss\bigl(\pi_\bullet(U^\alpha_{i,i})\bigr)$ is unitary, so the operator $\pi_\bullet(U^\alpha_{i,i})$ is Fredholm.
\end{proof}

\section{Type I locally compact quantum groups}\label{typeI}

In this section we recall the most important elements of the theory of locally compact quantum groups of type $\mathrm{I}$. A locally compact quantum group $\HH$ is of \emph{type $\mathrm{I}$} if $\C_0^\uu(\hh{\HH})$ is a type $\mathrm{I}$ \cst-algebra (cf.~\cite[Section IV.1]{blackadar}). This \cst-algebra is responsible for the unitary representations of $\HH$ (cf.~\cite{univ,mmu}).

The study of type $\mathrm{I}$ locally compact quantum group was initiated in the thesis \cite{DesmedtPhd} and continued e.g.~in \cite{CaspersPhD,JKcoam}. We will be using the results of \cite{DesmedtPhd,JKcoam} as well as some of their extensions from \cite{modular}.

The most important feature of these quantum groups is the existence of the \emph{Plancherel measure}, i.e.~a standard measure $\mu$ on the spectrum of $\C_0^\uu(\hh{\HH})$, denoted later by $\Irr(\HH)$,
a measurable field $\{\sH_x\}_{x\in\Irr(\HH)}$ of Hilbert spaces,
a measurable field of representations $\{\pi_x\}_{x\in\Irr(\HH)}$ of $\C_0^\uu(\hh{\HH})$,
a measurable field of strictly positive self-adjoint operators $\{D_x\}_{x\in\Irr(\HH)}$ and a unitary operator
\[
\cQ_{\ll}\colon\Ltwo(\hh{\HH})\longrightarrow\int_{\Irr(\HH)}^\oplus\HS(\sH_x)\,d\mu(x)
\]
such that
\[
\cQ_{\ll}\Linf(\hh{\HH})\cQ_{\ll}^*=\int_{\Irr(\HH)}^\oplus\bigl(\B(\sH_x)\tens\I_{\Bar{\sH_x}}\bigr)\,d\mu(x)
\]
(here for any Hilbert space $\cH$ we identify $\HS(\cH)$ with $\cH\tens\Bar{\cH}$) and for positive $a\in\C_0^\uu(\hh{\HH})$ the value of the left Haar measure $\hh{\ph}$ of $\hh{\HH}$ on $a$  is
\[
\hh{\ph}(a)=\int_{\Irr(\HH)}\Tr\bigl(\pi_x(a)\cdot{D_x^{-2}}\bigr)\,d\mu(x).
\]
We refer the reader to \cite[Section 3.4]{DesmedtPhd} and
% \cite[Section 2]{modular}
\cite{JKcoam}
for details.

The next lemma establishes the form of the image of an element of the universal \cst-algebra $\C_0^\uu(\hh{\HH})$ under the \emph{reducing morphism} (cf.~\cite[Section 2]{univ}, \cite[Definition 35]{mmu}) which was denoted  by $\hat{\pi}$ in \cite[Notation 2.15]{univ} and by $\Lambda_{\hh{\HH}}$ in \cite{mmu}. In order to avoid conflict with the notation for GNS maps we will denote it by $\Pi_{\hh{\HH}}$.

\begin{lemma}\label{lem5}
Let $\HH$ be a type $\mathrm{I}$ locally compact quantum group such that $\C_0^\uu(\hh{\HH})$ is separable.  Let $\mu$ be the Plancherel measure for $\HH$ an let $\Pi_{\hh{\HH}}\colon\C_0^\uu(\hh{\HH})\to\C_0(\hh{\HH})$ be the reducing morphism for $\hh{\HH}$. Then for any $a\in\C_0^\uu(\hh{\HH})$ we have
\begin{equation}\label{PiH}
\cQ_{\ll}\Pi_{\hh{\HH}}(a)\cQ_{\ll}^*
=\int_{\Irr(\HH)}^\oplus\bigl(\pi_x(a)\tens\I_{\Bar{\sH_x}}\bigr)\,d\mu(x).
\end{equation}
\end{lemma}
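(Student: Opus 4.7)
The plan is to reduce the identity \eqref{PiH} to the Plancherel formula for vectors that arise as GNS images of elements of $\C_0^\uu(\hh{\HH})$. Recall that in the setup of Desmedt/Caspers the unitary $\cQ_\ll$ is not constructed abstractly but rather defined on the GNS side: for $a\in\C_0^\uu(\hh{\HH})$ in the domain of the universal Haar weight $\hh{\ph}^\uu=\hh{\ph}\comp\Pi_{\hh{\HH}}$, one has
\[
\cQ_\ll\bigl(\Lambda_{\hh{\ph}}(\Pi_{\hh{\HH}}(a))\bigr)=\int_{\Irr(\HH)}^\oplus\pi_x(a)D_x^{-1}\,d\mu(x),
\]
where the right hand side is viewed inside $\int^\oplus\HS(\sH_x)\,d\mu(x)\cong\int^\oplus\sH_x\tens\Bar{\sH_x}\,d\mu(x)$. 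I would first quote this formula (which is essentially how the Plancherel unitary is constructed in \cite{DesmedtPhd,JKcoam}) and the fact that the vectors so obtained are dense in $\Ltwo(\hh{\HH})$.

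Next I would use left covariance of the GNS construction. For $a\in\C_0^\uu(\hh{\HH})$ arbitrary and $b$ in the GNS domain of $\hh{\ph}^\uu$, the element $ab$ is again in that domain and
\[
\Pi_{\hh{\HH}}(a)\Lambda_{\hh{\ph}}(\Pi_{\hh{\HH}}(b))=\Lambda_{\hh{\ph}}(\Pi_{\hh{\HH}}(ab)).
\]
Applying $\cQ_\ll$ and using the displayed Plancherel formula to both $b$ and $ab$, the left hand side becomes $(\cQ_\ll\Pi_{\hh{\HH}}(a)\cQ_\ll^*)\int^\oplus\pi_x(b)D_x^{-1}\,d\mu(x)$ while the right hand side is $\int^\oplus\pi_x(a)\pi_x(b)D_x^{-1}\,d\mu(x)$. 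The latter is exactly the action of the decomposable operator $\int^\oplus\bigl(\pi_x(a)\tens\I_{\Bar{\sH_x}}\bigr)\,d\mu(x)$ on $\int^\oplus\pi_x(b)D_x^{-1}\,d\mu(x)$ after one identifies $\HS(\sH_x)$ with $\sH_x\tens\Bar{\sH_x}$, because the tensor $T\tens\I$ acts on a Hilbert--Schmidt operator by left multiplication by $T$.

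Since the vectors $\cQ_\ll\Lambda_{\hh{\ph}}(\Pi_{\hh{\HH}}(b))$ are dense in the direct integral Hilbert space as $b$ ranges over the GNS domain, and since both $\cQ_\ll\Pi_{\hh{\HH}}(a)\cQ_\ll^*$ and $\int^\oplus(\pi_x(a)\tens\I)\,d\mu(x)$ are bounded operators that agree on a dense set, they must coincide, which yields \eqref{PiH}.

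The main obstacle I would expect is the careful bookkeeping: one has to verify that $\Lambda_{\hh{\ph}}\comp\Pi_{\hh{\HH}}$ really agrees with the universal GNS map under the standard identification (so that the Plancherel formula stated on the universal level transports to the reduced level via $\cQ_\ll$), that $D_x^{-1}$ viewed as an (unbounded, positive, self-adjoint) operator makes $\pi_x(a)D_x^{-1}$ a genuine Hilbert--Schmidt operator $\mu$-a.e.\ when $a\in\mathfrak{N}_{\hh{\ph}^\uu}$, and that left multiplication on $\HS(\sH_x)$ corresponds to $\cdot\tens\I_{\Bar{\sH_x}}$ under the chosen identification. Separability of $\C_0^\uu(\hh{\HH})$ is what guarantees that the field decompositions underlying these manipulations are well-defined.
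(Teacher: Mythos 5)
Your argument is correct in substance but takes a genuinely different route from the paper's. The paper never touches GNS vectors: it first observes that $\cQ_{\ll}\Pi_{\hh{\HH}}(a)\cQ_{\ll}^*$ is automatically of the form $\int^\oplus\bigl(a_x\tens\I_{\Bar{\sH_x}}\bigr)\,d\mu(x)$ because it lies in $\cQ_{\ll}\Linf(\hh{\HH})\cQ_{\ll}^*$, then identifies $a_x=\pi_x(a)$ for $a=\lambda^\uu(\omega)$ using Desmedt's decomposition of the slices $(\omega\tens\id)(\mathrm{W})$ of the Kac--Takesaki operator, and finally extends to all of $\C_0^\uu(\hh{\HH})$ by norm continuity of both sides in $\lambda^\uu(\omega)$ (for the direct integral side via \cite[Section 2.3, Proposition 4]{DixVN}) together with density of the range of $\lambda^\uu$. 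You instead work with the vector-level Plancherel formula $\cQ_{\ll}\Lambda_{\hh{\ph}}\bigl(\Pi_{\hh{\HH}}(b)\bigr)=\int^\oplus\pi_x(b)D_x^{-1}\,d\mu(x)$ and the left-module property of the GNS map; this is a viable and arguably more conceptual alternative, exhibiting \eqref{PiH} as the statement that $\cQ_{\ll}$ intertwines left multiplication. The one point where you lean on slightly more than Desmedt's theorem literally provides is that you need the displayed vector formula for all $b$ in the left ideal $\mathfrak{N}_{\hh{\ph}\comp\Pi_{\hh{\HH}}}$ (or at least on a set stable under left multiplication), so that it applies to $ab$ as well as to $b$; as usually stated the formula covers $b=\lambda^\uu(\omega)$ for $\omega$ in a particular dense subspace, so you must either extend it by a closedness argument for the GNS map or restrict to $a=\lambda^\uu(\omega_1)$, use $\lambda^\uu(\omega_1)\lambda^\uu(\omega_2)=\lambda^\uu(\omega_1*\omega_2)$, and then finish with the same continuity-plus-density step the paper uses. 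With that caveat --- which you essentially flag yourself --- your proof goes through; the operator-level route of the paper is shorter precisely because it bypasses the domain bookkeeping for the Haar weight.
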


\begin{proof}
Given $a\in\C_0^\uu(\hh{\HH})$ the element $\cQ_{\ll}\Pi_{\hh{\HH}}(a)\cQ_{\ll}^*$ can be written as
\[
\cQ_{\ll}\Pi_{\hh{\HH}}(a)\cQ_{\ll}^*=\int_{\Irr(\HH)}^{\oplus}\bigl(a_x\tens\I_{\Bar{\sH_x}}\bigr)\,d\mu(x).
\]
for some measurable field $\{a_\pi\}_{\pi\in\Irr(\HH)}$. By Desmedt's results (\cite{DesmedtPhd}) for any $\omega\in\Linf(\HH)_*$ we have
\[
\cQ_{\ll}\bigl((\omega\tens\id)(\mathrm{W})\bigr)\cQ_{\ll}^*=
\int_{\Irr(\HH)}^{\oplus}\bigl((\omega\tens\id)(U^{\pi_x})\tens\I_{\Bar{\sH_x}}\bigr)\,d\mu(x),
\]
where $\mathrm{W}\in\Linf(\HH)\bar{\tens}\Linf(\hh{\HH})$ is the Kac-Takesaki operator (\cite{kv,mmu,VanDaele}) and $U^{\pi_x}$ is the unitary representation of $\HH$ corresponding to $\pi_x$. Thus denoting by $\lambda^\uu$ the mapping
\[
\Linf(\HH)_*\ni\omega\longmapsto(\omega\tens\id)\wW\in\C_0^\uu(\hh{\HH})
\]
(where $\wW$ is the universal representation of $\HH$, cf.~\cite{mmu}) we have
\[
\cQ_{\ll}\Pi_{\hh{\HH}}\bigl(\lambda^\uu(\omega)\bigr)\cQ_{\ll}^*
=\int_{\Irr(\HH)}^\oplus\bigl(\pi_x(\lambda^\uu(\omega))\tens\I_{\Bar{\sH_x}}\bigr)\,d\mu(x).
\]
Both sides of the above equation are continuous with respect to $\lambda^\uu(\omega)$ (for the right hand side we can use \cite[Section 2.3, Proposition 4]{DixVN} because the range of $\lambda^\uu$ is dense in $\C_0^\uu(\hh{\HH})$) and \eqref{PiH} follows.
\end{proof}

In what follows we will use the above results for the discrete quantum group $\HH=\hh{\GG}$. Note that as $\GG$ is co-amenable, we have $\C^\uu(\GG)=\C(\GG)$.

\section{The Plancherel measure for the dual of $\GG$}

Let $\mu$ be the Plancherel measure for $\hh{\GG}$. This is a measure on $\Irr(\hh{\GG})=\{\bullet\}\cup\TT$ such that there is a unitary operator $\cQ_\ll\colon\Ltwo(\GG)\to\int^\oplus_{\Irr(\hh{\GG})}\HS(\sH_\pi)\,d\mu(\pi)$ and
\[
\cQ_{\ll}\Linf(\GG)\cQ_{\ll}^*=\int^\oplus_{\Irr(\hh{\GG})}\bigl(\B(\sH_x)\tens\I_{\Bar{\sH_x}}\bigr)\,d\mu(x).
\]
The subset $\TT$ of $\Irr(\hh{\GG})$ is measurable (because all representations belonging to $\TT$ are of the same dimension) and thus $\{\bullet\}$ is measurable. Moreover, since $\Linf(\GG)$ is non-commutative, we must have $\mu(\{\bullet\})>0$ and we can re-scale it, so that $\mu(\{\bullet\})=1$. It follows that
\begin{equation}\label{QlL2}
\cQ_{\ll}\bigl(\Ltwo(\GG)\bigr)=\HS(\sH_\bullet)\oplus\int^\oplus_{\TT}\HS(\sH_\lambda)\,d\mu(\lambda)
\end{equation}
and
\begin{equation}\label{MMM}
\cQ_{\ll}\Linf(\GG)\cQ_{\ll}^*=\bigl(\B(\sH_\bullet)\tens\I_{\Bar{\sH_\bullet}}\bigr)\oplus\int^\oplus_{\TT}\bigl(\B(\sH_\lambda)\tens\I_{\Bar{\sH_\lambda}}\bigr)\,d\mu(\lambda).
\end{equation}
We will denote $\cQ_{\ll}\Linf(\GG)\cQ_{\ll}^*$ by $M$ and the two the two summands in the decomposition \eqref{MMM} by $M_1$ and $M_2$ respectively, so that $M=M_1\oplus{M_2}$. We also let $\I_1$ and $\I_2$ be the units of $M_1$ and $M_2$. Note that $M_1$ is isomorphic to $\B(\sH_\bullet)$ and $M_2$ is commutative and in fact isomorphic to $\Linf(\TT,\left.\mu\right|_\TT)$.

\begin{lemma}\label{actonM}
Let $\beta$ be an automorphism of $M$. Then $\beta$ preserves the decomposition $M=M_1\oplus{M_2}$. In particular $\beta(\I_1)=\I_1$ and $\beta(\I_2)=\I_2$.
\end{lemma}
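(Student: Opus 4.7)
The plan is to exploit the structural asymmetry between the two summands: $M_1\cong\B(\sH_\bullet)$ is a non-commutative factor (since $\sH_\bullet$, the representation space of $\pi_\bullet$, is identified with $\ell_2(\ZZ_+)$ via the identity representation of $\cT$), whereas $M_2\cong\Linf(\TT,\mu|_\TT)$ is abelian. Any $*$-automorphism of $M$ must preserve the centre
\[
Z(M)=Z(M_1)\oplus Z(M_2)=\CC\I_1\oplus M_2,
\]
so $\beta(\I_1)$ is a central projection in $M$. Since every central projection has the unique form $c\I_1+q$ with $c\in\{0,1\}$ and $q$ a projection in $M_2$, I would write $\beta(\I_1)=c\I_1+q$ and then rule out the two bad cases.

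If $c=0$, so $\beta(\I_1)=q\in M_2$, then
\[
\beta(M_1)=\beta(\I_1)\,M\,\beta(\I_1)=qMq\subseteq M_2,
\]
forcing $\beta(M_1)$, and hence $M_1$, to be commutative, which is absurd. If instead $c=1$ but $q\ne 0$, then
\[
\beta(M_1)=(\I_1+q)\,M\,(\I_1+q)=M_1\oplus qM_2,
\]
which has the non-trivial central projection $\I_1$ and therefore is not a factor, again contradicting $\beta(M_1)\cong M_1$. The remaining possibility $c=1$, $q=0$ gives $\beta(\I_1)=\I_1$, and hence $\beta(\I_2)=\I-\I_1=\I_2$; preservation of the direct sum decomposition then follows by applying $\beta$ to $M_j=\I_j M\I_j$.

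There is no serious obstacle: the whole argument is a two-line case analysis, with all the content concentrated in the observation that, among non-zero central projections $p$ of $M$, the projection $\I_1$ is characterised by the condition that $pMp$ be a non-commutative factor. The only point requiring a little care is to spell out the correct form of central projections in $M$ from the explicit descriptions of $M_1$ and $M_2$ given in \eqref{MMM}; once that is in place, the factoriality of $M_1$ does all the work.
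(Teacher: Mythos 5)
Your proof is correct. It rests on exactly the same structural asymmetry that the paper exploits --- $M_1$ is a non-commutative factor while $M_2$ is abelian --- but the mechanism is organized differently. The paper considers the map $M_1\ni y\mapsto \I_2\beta(y)\in M_2$, observes that it is a normal $*$-homomorphism from a factor into a commutative von Neumann algebra and hence vanishes, concludes $\beta(M_1)\subseteq M_1$, repeats this for $\beta^{-1}$ to get equality (whence $\beta(\I_1)=\I_1$), and then runs a separate, somewhat more laborious, contradiction argument to show that $\beta$ also preserves $M_2$. You instead pass through the centre: $\beta$ preserves $Z(M)=\CC\I_1\oplus M_2$, every central projection has the form $c\I_1+q$ with $c\in\{0,1\}$ and $q$ a projection in $M_2$, and $\I_1$ is singled out among these as the unique one whose corner is a non-commutative factor. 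Both case exclusions are sound ($qMq=qM_2$ is commutative, and $M_1\oplus qM_2$ with $q\neq 0$ has the non-trivial central projection $\I_1$), and your route buys the second half of the statement for free: once $\beta(\I_1)=\I_1$ you get $\beta(\I_2)=\I-\I_1=\I_2$ and $\beta(M_j)=\beta(\I_jM)=\I_jM=M_j$ immediately, avoiding the paper's roundabout argument for $M_2$. The only bookkeeping your write-up should make explicit is the identification $Z(M)=Z(M_1)\oplus Z(M_2)$ and the fact that $\sH_\bullet$ has dimension at least $2$ (indeed it is $\ell_2(\ZZ_+)$), so that $M_1$ really is non-commutative.
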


\begin{proof}
Let $E_1$ and $E_2$ be the projections of $M$ onto the two summands $M_1$ and $M_2$ (so that $E_i(x)=\I_ix$ for any $x\in{M}$). The map $M_1\ni{y}\mapsto{E_2}\bigl(\beta(y)\bigr)\in{M_2}$ is a normal $*$-homomorphism $M_1\to{M_2}$ which must be zero because $M_1$ is a factor and $M_2$ is commutative. It follows that $\beta(M_1)\subset{M_1}$ and since this is also true for the automorphism $\beta^{-1}$, we have $\beta^{-1}(M_1)\subset{M_1}$ and acting with $\beta$ on both sides gives $M_1\subset\beta(M_1)$. It follows that $\beta$ restricts to an automorphisms of $M_1$, so it must preserve $\I_1$. Clearly if there were $z\in{M_2}$ such that $E_1\bigl(\beta(z)\bigr)\neq{0}$ then $z=\beta^{-1}(\beta(z))=\beta^{-1}\bigl(E_1(\beta(z))+E_2(\beta(z))\bigr)=\beta^{-1}\bigl(E_1(\beta(z))\bigr)+\beta^{-1}\bigl(E_2(\beta(z))\bigr)$ which is a contradiction because $\beta^{-1}$ is injective and preserves $M_1$, so $\beta^{-1}\bigl(E_1(\beta(z))\bigr)\neq{0}$ and hence $E_1(z)\neq{0}$. It follows that $\beta$ preserves $M_2$ and consequently $\beta(\I_2)=\I_2$.
\end{proof}

One way to use Lemma \ref{actonM} is to apply it to the scaling automorphisms (\cite[Page 32]{NTbook}) of $\Linf(\GG)$ transferred to $M$ via the unitary $\cQ_\ll$:
\[
\beta_t(a)=\cQ_{\ll}\tau_t\bigl(\cQ_{\ll}^*a\cQ_{\ll}\bigr)\cQ_{\ll}^*,\qquad{a}\in{M},\;t\in\RR.
\]
It follows that the one-parameter group $(\beta_t)_{t\in\RR}$ restricts to a one-parameter group of automorphisms of $M_1$. Now the scaling group $(\tau_t)_{t\in\RR}$ is implemented on $\Ltwo(\GG)$ by a one-parameter group $(P^{\ii{t}})_{t\in\RR}$ of unitary operators (\cite[Definition 6.9]{kv}, cf.~also \cite[Remarks after Proposition 5.13]{VanDaele}).

Thus we obtain a one-parameter group of automorphisms $(\alpha_t)_{t\in\RR}$ of $\B(\sH_\bullet)$ defined by
\[
\alpha_t(x)\tens\I_{\Bar{\sH_\bullet}}=\cQ_{\ll}\tau_t(\cQ_{\ll}^*(x\tens\I_{\Bar{\sH_\bullet}})\cQ_{\ll})\cQ_{\ll}^*,\qquad{x}\in\B(\sH_\bullet),\;t\in\RR
\]
and thus by \cite[Theorem 4.13]{kadison} there exists a strongly continuous one-parameter group of unitary operators $(A_t)_{t\in\RR}$ on $\sH_\bullet$ such that
\[
\alpha_t(x)\tens\I_{\Bar{\sH_\bullet}}=A_txA_{-t}\tens\I_{\Bar{\sH_\bullet}},\qquad{x}\in\B(\sH_\bullet),\;t\in\RR.
\]
Moreover the one-parameter group $(P^{\ii{t}})_{t\in\RR}$ also induces automorphisms of $\Linf(\GG)'\subset\B(\Ltwo(\GG))$ which can be transferred to automorphisms of $M'$ by the unitary $\cQ_\ll$. Clearly $M'={M_1}'\oplus{M_2}'$ and by a process analogous to the one presented for $M$ we obtain a $\sigma$-weakly continuous one-parameter group of automorphisms of ${M_1}'=\I_{\sH_\bullet}\tens\B(\Bar{\sH_\bullet})$ which yields a group $(\alpha'_t)_{t\in\RR}$ of automorphisms of $\B(\Bar{\sH_\bullet})$:
\[
\I_{\sH_\bullet}\tens\alpha'_t(y)=\cQ_{\ll}P^{\ii{t}}\bigl(\cQ_{\ll}^*(\I_{\sH_\bullet}\tens{y})\cQ_{\ll}\bigr)P^{-\ii{t}}\cQ_{\ll}^*,\qquad{y}\in\B(\Bar{\sH_\bullet}),\;t\in\RR.
\]
It follows that there is a strongly continuous one-parameter group of unitary operators $(B_t)_{t\in\RR}$ on $\sH_\bullet$ such that
\[
\I_{\sH_\bullet}\tens\alpha'_t(y)=\I_{\sH_\bullet}\tens{B_t^{\tt}yB_{-t}^{\tt}},\qquad{y}\in\B(\Bar{\sH_\bullet}),\;t\in\RR
\]
(for future notational convenience we choose to consider the group $(B_t)_{t\in\RR}$ on $\sH_\bullet$ and work with the transposed operators on $\Bar{\sH_\bullet}$).

Clearly the group $(\beta_t)_{t\in\RR}$ is implemented by the unitary operators $(\cQ_{\ll}P^{\ii{t}}\cQ_{\ll}^*)_{t\in\RR}$ and since the group preserves the projections $\I_1$ and $\I_2$, these operators are block-diagonal in the decomposition \eqref{QlL2}. It follows that for any $x\in\B(\sH_\bullet)$ and $y\in\B(\Bar{\sH_\bullet})$ we have
\[
(\I_1\cQ_{\ll}P^{\ii{t}}\cQ_{\ll}^*)(x\tens{y})(\I_1\cQ_{\ll}P^{-\ii{t}}\cQ_{\ll}^*)
=\alpha_t(x)\tens\alpha'_t(y)=(A_t\tens{B_t^{\tt}})(x\tens{y})(A_{-t}\tens{B_{-t}^{\tt}})
\]
for all $t\in\RR$. This means that
\[
\I_1\cQ_{\ll}P^{\ii{t}}\cQ_{\ll}^*=\lambda_t(A_t\tens{B_t^{\tt}}),\qquad{t}\in\RR
\]
for some complex numbers $(\lambda_t)_{t\in\RR}$ of absolute value $1$ depending continuously on $t$. Moreover, since the two one-parameter groups $(\cQ_{\ll}P^{\ii{t}}\cQ_{\ll}^*)_{t\in\RR}$ and $(A_t\tens{B_t^{\tt}})_{t\in\RR}$ obviously commute, $t\mapsto\lambda_t$ is also a homomorphism, so defining $\widetilde{A}_t=\lambda_tA_t$ we obtain a strongly continuous one-parameter group of unitaries such that
\[
\I_1\cQ_{\ll}P^{\ii{t}}\cQ_{\ll}^*=\widetilde{A}_t\tens{B_t^{\tt}},\qquad{t}\in\RR.
\]

In the proof of the next proposition we will use a technical lemma.

\begin{lemma}%[{\cite[Lemma 13]{modular}}]
Let $\sH$ be a Hilbert space and let
\[
J\colon\sH\tens\Bar{\sH}\ni\xi\tens\Bar{\eta}\longmapsto\eta\tens\Bar{\xi}\in\sH\tens\Bar{\sH}.
\]
Let $(a_t)_{t\in\RR}$ and $(b_t)_{t\in\RR}$ be strongly continuous one-parameter groups of unitary operators on $\sH$ and assume that $J(a_t\tens{b_t^{\tt}})=(a_t\tens{b_t^{\tt}})J$ for all $t\in\RR$. Then for all $t$ we have $a_t=b_{-t}$.
\end{lemma}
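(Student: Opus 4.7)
The plan is to unpack the commutation relation $J(a_t\tens b_t^{\tt})=(a_t\tens b_t^{\tt})J$ on elementary tensors, reduce it to a sign ambiguity, and then eliminate the sign by a connectedness argument.

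First I would apply both sides of the commutation to an arbitrary elementary tensor $\xi\tens\Bar{\eta}$. Using the identity $b_t^{\tt}\Bar{\eta}=\Bar{b_t^*\eta}$ together with the definition of $J$, a direct computation gives
\[
b_t^*\eta\tens\Bar{a_t\xi}=a_t\eta\tens\Bar{b_t^*\xi},\qquad\xi,\eta\in\sH.
\]

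Next I would extract a scalar from this tensor identity. For fixed $t$ and nonzero $\xi,\eta$, all four vectors appearing above are nonzero because $a_t$ and $b_t$ are unitary, so by uniqueness of pure tensor decompositions in $\sH\tens\Bar{\sH}$ there exists $\lambda\in\CC^*$ with $a_t\eta=\lambda\,b_t^*\eta$ and $b_t^*\xi=\bar{\lambda}^{-1}a_t\xi$. The first identity determines $\lambda$ in terms of $\eta$ alone and the second in terms of $\xi$ alone, hence $\lambda=\lambda(t)$ depends on $t$ only; unitarity forces $|\lambda(t)|=1$. Rewriting the second identity as $a_t\xi=\overline{\lambda(t)}\,b_t^*\xi$ and comparing with the first yields $\lambda(t)=\overline{\lambda(t)}$, so $\lambda(t)\in\{-1,+1\}$.

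Finally, $a_tb_t=\lambda(t)\I$ shows that $t\mapsto\lambda(t)$ is continuous (the strongly continuous one-parameter groups $(a_t)_{t\in\RR}$ and $(b_t)_{t\in\RR}$ compose to a strongly continuous family of unitaries, and pairing with a fixed unit vector extracts $\lambda(t)$). Since $\lambda(0)=1$ and $\{-1,+1\}$ is discrete, connectedness of $\RR$ forces $\lambda(t)\equiv 1$; therefore $a_t=b_t^{-1}=b_{-t}$ for all $t\in\RR$.

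The only slightly delicate part of the argument is the opening calculation, where one must keep track of the antilinearity of $J$ and of the convention that the transpose $b_t^{\tt}$ acts on $\Bar{\sH}$ via $b_t^{\tt}\Bar{\eta}=\Bar{b_t^*\eta}$; once the elementary-tensor identity is in hand, the rest is essentially algebraic, and the case $\dim\sH=1$ is subsumed by the same computation.
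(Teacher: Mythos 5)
Your proof is correct. The opening computation is right: with the convention $b_t^{\tt}\Bar{\eta}=\Bar{b_t^*\eta}$, applying both sides of $J(a_t\tens b_t^{\tt})=(a_t\tens b_t^{\tt})J$ to $\xi\tens\Bar{\eta}$ does give $b_t^*\eta\tens\Bar{a_t\xi}=a_t\eta\tens\Bar{b_t^*\xi}$; the extraction of a single scalar $\lambda(t)$ (independent of $\xi$ and $\eta$, since it is determined by $\eta$ alone through the first relation and by $\xi$ alone through the second) is valid; and the reality argument followed by continuity, $\lambda(0)=1$ and connectedness of $\RR$ closes the proof.

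The paper reaches the same conclusion by a longer route. It rewrites the hypothesis as $J(a_t\tens b_t^{\tt})J=a_t\tens b_t^{\tt}$ and computes directly that $J(a_t\tens b_t^{\tt})J=b_{-t}\tens a_{-t}^{\tt}$, which under the identification $\sH\tens\Bar{\sH}\cong\HS(\sH)$ yields $a_tSb_t=b_{-t}Sa_{-t}$ and hence self-adjointness of $a_tb_t$ and $b_ta_t$. Separately, it shows by a commutant computation that $a_{-s}b_ta_sb_{-t}$ commutes with all of $\B(\sH)$ and is therefore a scalar $\lambda_{t,s}\I$, and then combines the two facts at $s=-t$ using the same continuity-plus-$\lambda_{0,0}=1$ argument you use, arriving at $b_{2t}=a_{-2t}$. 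Your version works at the level of elementary tensors with a single one-parameter scalar function, bypassing both the commutant argument and the two-parameter family $\lambda_{t,s}$; the underlying mechanism --- a unimodular scalar forced to be real, hence $\pm1$, hence identically $1$ by connectedness --- is the same in both, but your execution is shorter and more elementary.
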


\begin{proof}
On one hand we have
\begin{equation}\label{JabJ0}
J(a_t\tens{b_t^{\tt}})J=a_t\tens{b_t^{\tt}},
\end{equation}
so for any $r,s\in\RR$ and any $x\in\B(\sH)$
\begin{equation}\label{duzoab}
\begin{aligned}
(a_s\tens{b_s^{\tt}})J(a_t\tens{b_t^{\tt}})J(x\tens\I_{\Bar{\sH}})&J(a_{-t}\tens{b_{-t}^{\tt}})J(a_{-s}\tens{b_{-s}^{\tt}})\\
&=(a_s\tens{b_s^{\tt}})(a_t\tens{b_t^{\tt}})(x\tens\I_{\Bar{\sH}})(a_{-t}\tens{b_{-t}^{\tt}})(a_{-s}\tens{b_{-s}^{\tt}})\\
&=(a_t\tens{b_t^{\tt}})(a_s\tens{b_s^{\tt}})(x\tens\I_{\Bar{\sH}})(a_{-s}\tens{b_{-s}^{\tt}})(a_{-t}\tens{b_{-t}^{\tt}})\\
&=J(a_t\tens{b_t^{\tt}})J(a_s\tens{b_s^{\tt}})(x\tens\I_{\Bar{\sH}})(a_{-s}\tens{b_{-s}^{\tt}})J(a_{-t}\tens{b_{-t}^{\tt}})J.
\end{aligned}
\end{equation}
On the other hand
\begin{equation}\label{JabJ}
J(a_t\tens{b_t^{\tt}})J=b_{-t}\tens{a_{-t}^{\tt}},\qquad{t}\in\RR,
\end{equation}
so \eqref{duzoab} reads
\[
(a_s\tens{b_s^{\tt}})(b_{-t}\tens{a_{-t}^{\tt}})(x\tens\I_{\Bar{\sH}})(b_{t}\tens{a_{t}^{\tt}})(a_{-s}\tens{b_{-s}^{\tt}})
=(b_{-t}\tens{a_{-t}^{\tt}})(a_s\tens{b_s^{\tt}})(x\tens\I_{\Bar{\sH}})(a_{-s}\tens{b_{-s}^{\tt}})(b_{t}\tens{a_{t}^{\tt}}).
\]
Thus for any $x$ and all $s,t$ we have $a_sb_{-t}xb_ta_{-s}=b_{-t}a_sxa_{-s}b_t$, i.e.~$a_{-s}b_ta_sb_{-t}$ commutes with all $x\in\B(\sH)$.

Therefore there exists a continuous family $\{\lambda_{t,s}\}_{t,s\in\RR}$ of complex numbers of absolute value $1$ such that
\begin{equation}\label{attb}
a_{-s}b_ta_sb_{-t}=\lambda_{t,s}\I_{\sH},\qquad{t,s}\in\RR.
\end{equation}

Note now that in view of the canonical isomorphism $\B(\sH\tens\Bar{\sH})\cong\B(\HS(\sH))$ given by
\[
\B(\sH\tens\Bar{\sH})\ni{x\tens{y^{\tt}}}\longmapsto
\bigl(S\mapsto{xSy}\bigr)\in\B\bigl(\HS(\sH)\bigr)
\]
equations \eqref{JabJ0} and \eqref{JabJ} mean that
\[
a_tSb_t=b_{-t}Sa_{-t},\qquad{t}\in\RR,\;S\in\HS(\sH)
\]
which by strong density of $\HS(\sH)$ in $\B(\sH)$ gives
\[
a_tb_t=b_{-t}a_{-t},\qquad{t}\in\RR
\]
In particular for each $t$ the operator $a_tb_t$ is self-adjoint, and taking adjoints of this for $-t$ instead of $t$ we see that also $b_ta_t$ is self-adjoint for all $t$.

Therefore inserting $s=-t$ in \eqref{attb} gives $a_tb_t=\lambda_{t,-t}b_ta_t$ and since $a_tb_t$ and $b_ta_t$ are self-adjoint, $t\mapsto\lambda_{t,-t}$ is continuous and $\lambda_{0,0}=1$, we obtain $\lambda_{t,-t}=1$ for all $t$.

Consequently $a_tb_t=b_ta_t=(b_ta_t)^*=a_{-t}b_{-t}$, so that $b_{2t}=a_{-2t}$ for all $t$.
\end{proof}

\begin{proposition}
With the notation introduced above we have $\widetilde{A}_t=B_{-t}$ for all $t\in\RR$ .
\end{proposition}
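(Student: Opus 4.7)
My strategy is to apply the preceding lemma with $\sH=\sH_\bullet$, $a_t=\widetilde{A}_t$ and $b_t=B_t$.  Both $(\widetilde{A}_t)_{t\in\RR}$ and $(B_t)_{t\in\RR}$ were already produced as strongly continuous one-parameter groups of unitaries on $\sH_\bullet$, so the only task left is to exhibit an antiunitary on $\sH_\bullet\tens\Bar{\sH_\bullet}$ of the form described in the lemma which commutes with $\widetilde{A}_t\tens B_t^{\tt}$ for every $t\in\RR$.

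The natural candidate comes from Tomita--Takesaki theory.  Let $\mathsf{J}$ denote the modular conjugation of the standard form of $\Linf(\GG)$ on $\Ltwo(\GG)$, and set $\widetilde{\mathsf{J}}=\cQ_\ll\mathsf{J}\cQ_\ll^*$.  The Plancherel identification \eqref{MMM} presents $M$ in standard form on $\cQ_\ll(\Ltwo(\GG))$, so $\widetilde{\mathsf{J}}$ is a standard-form modular conjugation for $M$.  Since $\I_1$ is the unique minimal (atomic) central projection of $M$, any such conjugation must fix it --- a standard-form modular conjugation induces a bijection on the central projections preserving their ``type''.  Hence $\widetilde{\mathsf{J}}$ is block-diagonal in \eqref{QlL2} and its restriction to the first block is an antiunitary $J_1$ on $\sH_\bullet\tens\Bar{\sH_\bullet}$ putting $M_1$ in standard form.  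Because $M_1\cong\B(\sH_\bullet)$ is a factor, standard-form modular conjugations on $\sH_\bullet\tens\Bar{\sH_\bullet}$ are unique up to a unimodular scalar, so $J_1=cJ$ for some $c\in\TT$, where $J$ is the flip of the preceding lemma.

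The remaining ingredient is the Kustermans--Vaes fact that $(P^{\ii t})_{t\in\RR}$ commutes with $\mathsf{J}$ (cf.~\cite{kv}).  Transferring this commutation through $\cQ_\ll$ and restricting to the first block of \eqref{QlL2} yields that $J_1$, and hence $J$ (the phase $c$ cancels on each side), commutes with $\widetilde{A}_t\tens B_t^{\tt}$ for every $t\in\RR$.  The preceding lemma then delivers $\widetilde{A}_t=B_{-t}$ for all $t\in\RR$.

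The step I expect to be most delicate is the passage from ``modular conjugation of $M$'' to ``scalar multiple of the flip on the first block''.  Concretely, one has to verify that $\widetilde{\mathsf{J}}$ really commutes with $\I_1$, rather than merely permuting $\I_1$ with some other central projection, and then appeal to standard-form uniqueness for the factor $M_1$.  The former follows from the fact that $\I_1$ is distinguished inside $Z(M)$ as the unit of the unique type $\mathrm{I}$ factor summand, so it must be preserved by any antiautomorphism of $Z(M)$ induced by a standard-form modular conjugation; still, it seems worth writing out rather than citing as folklore.
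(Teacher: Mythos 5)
Your overall strategy is the same as the paper's: commute $P^{\ii t}$ past the modular conjugation $J_\GG$, transfer through $\cQ_\ll$, cut down by $\I_1$, and feed the resulting commutation into the flip lemma. The block-diagonality step is also fine in substance (though the cleanest justification is simply that a standard-form conjugation satisfies $JzJ=z^*$ for central $z$, hence fixes every central projection; your ``unique type $\mathrm{I}$ factor summand'' phrasing is slightly off, since $M_2\cong\Linf(\TT,\mu|_\TT)$ may have atoms and hence one-dimensional factor summands --- $M_1$ is distinguished as the noncommutative one).

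The genuine gap is the claim that standard-form modular conjugations of $M_1\cong\B(\sH_\bullet)$ on $\HS(\sH_\bullet)$ are unique up to a unimodular scalar. They are not. Haagerup's uniqueness theorem gives uniqueness up to conjugation by a unitary in the commutant $\I_{\sH_\bullet}\tens\B(\Bar{\sH_\bullet})$, so all you can conclude abstractly is $J_1=(v\tens v^{\tt})J$ for some unitary $v$ on $\sH_\bullet$ (and in fact $cJ$ with $c\neq1$ is not a modular conjugation at all, since it cannot fix a self-dual cone pointwise). With only $J_1=(v\tens v^{\tt})J$, the relation $J_1(\widetilde{A}_t\tens B_t^{\tt})=(\widetilde{A}_t\tens B_t^{\tt})J_1$ yields $B_{-t}\tens\widetilde{A}_{-t}^{\tt}=(v^*\widetilde{A}_tv)\tens(v^*B_tv)^{\tt}$, i.e.\ $\widetilde{A}_t$ is unitarily conjugate to $B_{-t}$ up to a phase --- strictly weaker than the statement. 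To pin down $v=\I$ you need to know either that $\cQ_\ll$ carries the positive cone of $\Ltwo(\GG)$ to the cone of positive Hilbert--Schmidt operators in the $\bullet$-fiber, or directly that $\I_1\cQ_\ll J_\GG\cQ_\ll^*\I_1$ is the flip $\xi\tens\Bar{\eta}\mapsto\eta\tens\Bar{\xi}$. This is precisely what the paper imports from Desmedt's Plancherel theorem (\cite[Theorem 3.4.5\,(3)]{DesmedtPhd}); it is a concrete property of the construction of $\cQ_\ll$ and cannot be recovered from abstract standard-form uniqueness.
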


\begin{proof}
We will use the fact that for all $t$ we have $P^{\ii{t}}J_\GG=J_{\GG}P^{\ii{t}}$, where $J_\GG$ is the modular conjugation of the Haar measure of $\GG$ (\cite[Remarks after Theorem 5.17]{VanDaele}).
% Moreover, by \cite[Proposition 1]{modular} % wersja z maila z 14.04.2020, w wersji z maila z 29.04.2020 jest to Proposition 3.3
Moreover, by \cite[Theorem 3.4.5 (3)]{DesmedtPhd}, the operator
\[
\cQ_{\ll}J_\GG\cQ_{\ll}^*\colon\HS(\sH_\bullet)\oplus\int^\oplus_{\TT}\HS(\sH_\lambda)\,d\mu(\lambda)
\longrightarrow\HS(\sH_\bullet)\oplus\int^\oplus_{\TT}\HS(\sH_\lambda)\,d\mu(\lambda)
\]
acts as
\[
(\xi\tens\Bar{\eta})\oplus\int^\oplus_\TT(\xi_\lambda\tens\Bar{\eta_\lambda})\,d\mu(\lambda)\longmapsto
(\eta\tens\Bar{\xi})\oplus\int^\oplus_\TT(\eta_\lambda\tens\Bar{\xi_\lambda})\,d\mu(\lambda).
\]
In particular $\cQ_{\ll}J_\GG\cQ_{\ll}^*$ is block-diagonal and
\[
\I_1\cQ_{\ll}J_\GG\cQ_{\ll}^*\colon\HS(\sH_\bullet)\ni(\xi\tens\Bar{\eta})\longmapsto
(\eta\tens\Bar{\xi})\in\HS(\sH_\bullet).
\]
Therefore
\begin{align*}
\I_1\cQ_{\ll}J_\GG\cQ_{\ll}^*\I_1(\widetilde{A}_t\tens{B_t^{\tt}})
&=\I_1\cQ_{\ll}J_{\GG}P^{\ii{t}}\cQ_{\ll}^*\I_1\\
&=\I_1\cQ_{\ll}P^{\ii{t}}J_{\GG}\cQ_{\ll}^*\I_1\\
&=(\widetilde{A}_t\tens{B_t^{\tt}})\I_1\cQ_{\ll}J_\GG\cQ_{\ll}^*\I_1
\end{align*}
for all $t\in\RR$.
\end{proof}

\begin{corollary}\label{zeStw1}
For each $t\in\RR$ the restriction of $\cQ_{\ll}P^{\ii{t}}\cQ_{\ll}^*$ to $\HS(\sH_\bullet)$ is equal to $B_{-t}\tens{B_t^{\tt}}$.
\end{corollary}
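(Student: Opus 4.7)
The plan is to observe that this corollary is essentially an immediate bookkeeping consequence of two facts already in hand: the factorization $\I_1\cQ_{\ll}P^{\ii{t}}\cQ_{\ll}^*=\widetilde{A}_t\tens{B_t^{\tt}}$ established in the discussion preceding the technical lemma, and the identification $\widetilde{A}_t=B_{-t}$ just obtained in the proposition. So the proof should be a one- or two-line substitution, and I would present it that way rather than redoing any of the underlying analysis.

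First I would recall that $\cQ_{\ll}P^{\ii{t}}\cQ_{\ll}^*$ is block-diagonal with respect to the decomposition \eqref{QlL2}. This was explained earlier in the section: the group $(\beta_t)_{t\in\RR}$ preserves the central projections $\I_1$ and $\I_2$ by Lemma \ref{actonM}, and the unitaries $\cQ_{\ll}P^{\ii{t}}\cQ_{\ll}^*$ implement it, so they commute with $\I_1$ and $\I_2$. In particular the restriction of $\cQ_{\ll}P^{\ii{t}}\cQ_{\ll}^*$ to the summand $\HS(\sH_\bullet)=\sH_\bullet\tens\Bar{\sH_\bullet}$ coincides with the operator $\I_1\cQ_{\ll}P^{\ii{t}}\cQ_{\ll}^*$ viewed on that subspace.

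Second, I would invoke the identity
\[
\I_1\cQ_{\ll}P^{\ii{t}}\cQ_{\ll}^*=\widetilde{A}_t\tens{B_t^{\tt}},\qquad{t}\in\RR,
\]
which was derived earlier from the fact that the two commuting one-parameter groups $(\cQ_{\ll}P^{\ii{t}}\cQ_{\ll}^*)_{t\in\RR}$ and $(A_t\tens B_t^{\tt})_{t\in\RR}$ implement the same automorphism group on $\B(\sH_\bullet)\tens\I_{\Bar{\sH_\bullet}}$ and on $\I_{\sH_\bullet}\tens\B(\Bar{\sH_\bullet})$, so they differ by a continuous scalar cocycle which was absorbed into $\widetilde{A}_t$.

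Third, I would substitute the conclusion $\widetilde{A}_t=B_{-t}$ of the preceding proposition into this identity to obtain
\[
\I_1\cQ_{\ll}P^{\ii{t}}\cQ_{\ll}^*=B_{-t}\tens{B_t^{\tt}},\qquad{t}\in\RR,
\]
which, combined with the first paragraph, yields the claim. There is no real obstacle here; the entire content of the corollary is to package the previous proposition in the form that will be convenient for the subsequent arguments.
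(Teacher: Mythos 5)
Your proposal is correct and matches the paper's (implicit) reasoning exactly: the corollary is stated without proof precisely because it follows by substituting the identity $\widetilde{A}_t=B_{-t}$ from the preceding proposition into the already-established factorization $\I_1\cQ_{\ll}P^{\ii{t}}\cQ_{\ll}^*=\widetilde{A}_t\tens{B_t^{\tt}}$, using block-diagonality of $\cQ_{\ll}P^{\ii{t}}\cQ_{\ll}^*$ with respect to the decomposition \eqref{QlL2}. Nothing further is needed.
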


In what follows we let $B$ be the exponential of the infinitesimal generator of the one-parameter group $(B_t)_{t\in\RR}$, so that $B_t=B^{\ii{t}}$ for all $t$.

\section{Operators $D_\bullet$ and $B$}

Since $\{\bullet\}$ is a subset of $\Irr(\hh{\GG})$ of positive measure, the operator $D_\bullet$ is well defined (see Section \ref{typeI}). By the properties of Plancherel measure (Section \ref{typeI}) we have
\[
1=\bh(\I)=\Tr(D_\bullet^{-2})+\int_\TT\Tr(D_\lambda^{-2})\,d\mu(\lambda),
\]
so $D_\bullet^{-1}$ is a Hilbert-Schmidt operator, so in particular it is compact. It follows that $D_\bullet^{-1}\in\cT$. The eigenvalues of $D_\bullet^{-1}$ are of finite multiplicity, they form a countable subset of $\left]0,+\infty\right[$ and we have the norm convergent series
\[
D_\bullet^{-1}=\sum_{q\in\spec(D_\bullet^{-1})}q\bchi(D_\bullet^{-1}=q)
\]
(\cite[Section 5.2]{primer}).

\begin{proposition}
The operators $B$ and $D_\bullet$ strongly commute.
\end{proposition}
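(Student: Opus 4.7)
The plan is to derive the commutation of $B$ and $D_\bullet$ from the $\tau_t$-invariance of the Haar measure $\bh$ together with the Plancherel formula, and then to transfer the resulting statement about $A_t$ to $B$ via the identity $\widetilde A_t=B_{-t}$ proved in the preceding proposition.

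First, I would observe that $\tau_t$ restricts to a $\C^*$-automorphism of $\C(\GG)$ (it is the continuous extension of the polynomial automorphism of $\Pol(\GG)$, which makes sense because $\C(\GG)$ is the universal envelope of $\Pol(\GG)$ by co-amenability). The $M_1$-summand of the Plancherel decomposition of $\Linf(\GG)$ implements the normal extension of $\pi_\bullet$, so the description of $\beta_t|_{M_1}=\Ad(A_t)\otimes\id$ from Section~4 translates, via the isomorphism $\pi_\bullet\colon\C(\GG)\to\cT$, into
\[
\pi_\bullet(\tau_t(a))=A_t\,\pi_\bullet(a)\,A_{-t},\qquad a\in\C(\GG),\;t\in\RR.
\]
In particular $\tau_t$ preserves the ideal $\sK=\pi_\bullet^{-1}(\cK)$. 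Next, I would apply Lemma~\ref{lem5} with $\HH=\hh\GG$, so that $\hat\ph=\bh$; for $a\in\sK$ the $\TT$-part of the Plancherel formula vanishes because the one-dimensional representations $\ss_\lambda$ annihilate $\cK$, leaving
\[
\bh(a)=\Tr\bigl(\pi_\bullet(a)\,D_\bullet^{-2}\bigr),\qquad a\in\sK,
\]
which is well defined since $D_\bullet^{-2}$ is trace class. Using $\bh\circ\tau_t=\bh$, applying this identity both to $a$ and to $\tau_t(a)$, and then cycling the trace (legitimate as $D_\bullet^{-2}$ is trace class while $A_{\pm t}$ and $\pi_\bullet(a)$ are bounded), I obtain
\[
\Tr\bigl(\pi_\bullet(a)\bigl(D_\bullet^{-2}-A_{-t}D_\bullet^{-2}A_t\bigr)\bigr)=0,\qquad a\in\sK,\;t\in\RR.
\]
Since $\{\pi_\bullet(a)\mid a\in\sK\}=\cK$ contains all finite-rank operators, which separate trace-class operators through the trace pairing, this forces $A_{-t}D_\bullet^{-2}A_t=D_\bullet^{-2}$, i.e.~$A_t$ strongly commutes with $D_\bullet$.

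Finally, from Section~4 we have $\widetilde A_t=\lambda_t A_t$ for some continuous family of scalars $\lambda_t\in\TT$, and by the previous proposition $\widetilde A_t=B_{-t}$; hence $B_{-t}$ strongly commutes with $D_\bullet$ for every $t\in\RR$, which means precisely that $B$ and $D_\bullet$ strongly commute. The step I expect to require the most care is the compatibility of the fixed isomorphism $\pi_\bullet\colon\C(\GG)\to\cT$ with the ``Plancherel $\pi_\bullet$'' furnished by the $M_1$-block---since both are irreducible infinite-dimensional representations they are unitarily equivalent, and after absorbing such an intertwiner into the choice of basis of $\sH_\bullet$ (and correspondingly into $A_t$, $D_\bullet$) the identities above hold as stated; the remaining manipulations with traces and the Plancherel formula are routine.
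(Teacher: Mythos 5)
Your argument is correct, but it runs along a different track than the paper's. The paper proves this proposition at the level of the GNS Hilbert space: by Desmedt's theorem the vector $\cQ_\ll\Lambda_{\bh}(\I)$ has $\bullet$-component equal to the Hilbert--Schmidt operator $D_\bullet^{-1}$, and since $\tau_t(\I)=\I$ and $\cQ_\ll P^{\ii t}\cQ_\ll^*$ acts on $\HS(\sH_\bullet)$ as $S\mapsto B_{-t}SB_t$ (Corollary \ref{zeStw1}), one reads off $B_{-t}D_\bullet^{-1}B_t=D_\bullet^{-1}$ in one line, directly for $B_t$. You instead work at the level of the weight: you restrict the Plancherel trace formula for $\bh$ to the ideal $\sK$ (where the one-dimensional representations vanish, so $\bh(a)=\Tr(\pi_\bullet(a)D_\bullet^{-2})$), invoke $\bh\comp\tau_t=\bh$, and use that finite-rank operators separate trace-class operators to get $A_{-t}D_\bullet^{-2}A_t=D_\bullet^{-2}$, finally transferring from $A_t$ to $B_t$ via $\widetilde{A}_t=\lambda_tA_t=B_{-t}$. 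All the ingredients you use are legitimately available at this point in the paper (the weight formula is quoted in Section \ref{typeI}, scaling-invariance of the Haar state is standard for compact quantum groups, and the identification $\I_1\cQ_\ll a\cQ_\ll^*\I_1=\pi_\bullet(a)\tens\I_{\Bar{\sH_\bullet}}$ follows from Lemma \ref{lem5}, modulo the unitary-equivalence bookkeeping you correctly flag). What the paper's route buys is brevity and the avoidance of the separation argument; what your route buys is that it only uses the weight formula rather than the finer statement about how $\cQ_\ll$ acts on GNS vectors $\Lambda_{\bh}(x)$, i.e.\ Desmedt's Theorem 3.4.5(4). One small stylistic remark: since conjugation by the scalar $\lambda_t$ is trivial, you could have phrased the whole computation with $B_{-t}=\widetilde{A}_t$ in place of $A_t$ from the start and skipped the final transfer step.
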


\begin{proof}
From the properties of the Plancherel measure (\cite[Theorem 3.4.5(4)]{DesmedtPhd}) we get
\[
\cQ_\ll\Lambda_{\bh}(\I)=D_\bullet^{-1}\oplus\int^\oplus_{\TT}D_\lambda^{-1}\,d\mu(\lambda)\in\HS(\sH_\bullet)\oplus\int^\oplus_{\TT}\HS(\sH_\lambda)\,d\mu(\lambda).
\]
Now we fix $t\in\RR$ and note that since $\tau_t(\I)=\I$, we have
\begin{align*}
D_\bullet^{-1}\oplus\int^\oplus_{\TT}D_\lambda^{-1}\,d\mu(\lambda)
&=\cQ_\ll\Lambda_{\bh}(\I)\\
&=\cQ_\ll\Lambda_{\bh}\bigl(\tau_t(\I)\bigr)\\
&=\cQ_{\ll}P^{\ii{t}}\Lambda_{\bh}(\I)\\
&=B_{-t}D_\bullet^{-1}B_t\oplus\bigl(\I_2(\cQ_{\ll}P^{\ii{t}}\cQ_{\ll}^*)\bigr)
\biggl(\int^\oplus_{\TT}D_\lambda^{-1}\,d\mu(\lambda)\biggr),
\end{align*}
which implies that $B_tD_\bullet^{-1}=D_\bullet^{-1}B_t$.
\end{proof}

\begin{corollary}\label{DBD}
The operator $B$ preserves the decomposition of $\sH_\bullet$ into eigenspaces of $D_\bullet^{-1}$:
\[
\sH_\bullet=\bigoplus_{q\in\spec(D_\bullet^{-1})}\sH_\bullet(D_\bullet^{-1}=q)
\]
so that
\[
B=\bigoplus_{q\in\spec(D_\bullet^{-1})}\bchi(D_\bullet^{-1}=q)B\bchi(D_\bullet^{-1}=q).
\]
\end{corollary}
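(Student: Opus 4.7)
The plan is to read this corollary off directly from the preceding proposition (strong commutation of $B$ and $D_\bullet$) together with the fact, noted at the beginning of this section, that $D_\bullet^{-1}\in\cK$ is a compact (indeed Hilbert--Schmidt) positive operator. Nothing new really needs to be proved; the corollary is a packaging of the spectral theorem in the concrete form that suits later use.

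First I would recall that since $D_\bullet^{-1}$ is compact and strictly positive, its spectrum is a countable subset of $\left]0,+\infty\right[$, each nonzero spectral value is an eigenvalue of finite multiplicity, and the spectral resolution takes the form of a norm-convergent sum of finite-rank projections. Equivalently, $\sH_\bullet$ decomposes as the orthogonal direct sum
\[
\sH_\bullet=\bigoplus_{q\in\spec(D_\bullet^{-1})}\sH_\bullet(D_\bullet^{-1}=q),
\]
with the projections onto the summands given exactly by $\bchi(D_\bullet^{-1}=q)$.

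Next I would invoke the fact that the strong commutation of $B$ and $D_\bullet$ (equivalently of $B$ and $D_\bullet^{-1}$) from the previous proposition means that $B$ commutes with every bounded Borel function of $D_\bullet^{-1}$, and in particular with each of its spectral projections $\bchi(D_\bullet^{-1}=q)$. Hence each eigenspace $\sH_\bullet(D_\bullet^{-1}=q)$ is invariant under both $B$ and its inverse, so $B$ is block-diagonal with respect to the above decomposition, which is precisely the displayed formula in the corollary.

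Since this is a direct application of the spectral theorem, I do not expect any genuine obstacle; the only point worth stating carefully is that ``strong commutation'' of two (possibly unbounded) self-adjoint operators is defined precisely so as to yield commutation of their spectral projections, which is what is being used.
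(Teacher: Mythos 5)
Your proposal is correct and follows exactly the route the paper intends: the preceding proposition gives $B_tD_\bullet^{-1}=D_\bullet^{-1}B_t$ for all $t$, so $B$ commutes with the finite-rank spectral projections $\bchi(D_\bullet^{-1}=q)$ of the compact strictly positive operator $D_\bullet^{-1}$, and the block-diagonal form follows; the paper itself offers no separate proof, treating this as immediate. Your added care about unbounded self-adjoint operators and invariance of each (finite-dimensional) eigenspace under both $B$ and $B^{-1}$ is exactly the right point to make explicit.
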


\section{$\GG$ is of Kac type}\label{KacType}

We begin with three lemmas relating the structure of the compact quantum group $\GG$ to the decomposition of $\sH_\bullet$ into eigenspaces of $D_\bullet^{-1}$. The modular group of $\bh$ (cf.~\cite[Section 8]{cqg}) will be denoted by $(\sigma_t^{\bh})_{t\in\RR}$.

\begin{lemma}\label{prev}
For $a\in\cT$ and $t\in\RR$ we have
\begin{align*}
\pi_\bullet\bigl(\sigma_t^{\bh}(\pi_\bullet^{-1}(a))\bigr)&=D_\bullet^{-2\ii{t}}aD_\bullet^{2\ii{t}},\\
\pi_\bullet\bigl(\tau_t(\pi_\bullet^{-1}(a))\bigr)&=B_{-t}aB_t.
\end{align*}
\end{lemma}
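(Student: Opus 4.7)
Both formulas are obtained by transporting the action of the one-parameter groups $(\sigma_t^{\bh})_{t\in\RR}$ and $(\tau_t)_{t\in\RR}$ to the Plancherel picture via $\cQ_\ll$, then restricting to the summand $M_1$ and reading off the component using $\pi_\bullet$. The starting point is that, since $\GG$ is co-amenable, the reducing morphism $\Pi_{\hh{\hh\GG}}=\Pi_{\GG}$ is the identity on $\C(\GG)$, so Lemma~\ref{lem5} applied to $\HH=\hh\GG$ yields, for every $a\in\cT$,
\[
\cQ_\ll\pi_\bullet^{-1}(a)\cQ_\ll^*
=\bigl(a\tens\I_{\Bar{\sH_\bullet}}\bigr)\oplus\int^\oplus_{\TT}\pi_\lambda\bigl(\pi_\bullet^{-1}(a)\bigr)\tens\I_{\Bar{\sH_\lambda}}\,d\mu(\lambda),
\]
where we identify $\sH_\bullet$ with $\ell_2(\ZZ_+)$ via $\pi_\bullet$ so that the Plancherel representation at $\bullet$ is just the inclusion $\cT\hookrightarrow\B(\sH_\bullet)$.

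\textbf{Scaling group.} Since $P^{\ii t}$ implements $\tau_t$ on $\Ltwo(\GG)$, we have
\[
\cQ_\ll\tau_t\bigl(\pi_\bullet^{-1}(a)\bigr)\cQ_\ll^*
=\bigl(\cQ_\ll P^{\ii t}\cQ_\ll^*\bigr)\bigl(\cQ_\ll\pi_\bullet^{-1}(a)\cQ_\ll^*\bigr)\bigl(\cQ_\ll P^{-\ii t}\cQ_\ll^*\bigr).
\]
By Corollary~\ref{zeStw1} the operator $\cQ_\ll P^{\ii t}\cQ_\ll^*$ is block-diagonal in the decomposition \eqref{QlL2} with $M_1$-restriction equal to $B_{-t}\tens B_t^{\tt}$. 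Computing the $M_1$-component therefore gives
\[
(B_{-t}\tens B_t^{\tt})(a\tens\I)(B_t\tens B_{-t}^{\tt})
=B_{-t}aB_t\tens\I_{\Bar{\sH_\bullet}},
\]
which, compared with the $M_1$-component $\pi_\bullet(\tau_t(\pi_\bullet^{-1}(a)))\tens\I_{\Bar{\sH_\bullet}}$ of the left-hand side, yields the second formula.

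\textbf{Modular group.} The argument is parallel, but now one uses the known form of $\sigma_t^{\bh}$ in the Plancherel picture. Since the decomposition $M=M_1\oplus M_2$ is preserved by every automorphism of $M$ (Lemma~\ref{actonM}), and the transported modular group $\cQ_\ll\sigma_t^{\bh}(\cQ_\ll^*\,\cdot\,\cQ_\ll)\cQ_\ll^*$ consists of automorphisms of $M$, it restricts to a one-parameter group of automorphisms of the factor $M_1\cong\B(\sH_\bullet)$. This restriction is the modular group of $\bh\!\upharpoonright_{M_1}$, and by the Plancherel formula of Section~\ref{typeI} combined with $\mu(\{\bullet\})=1$, the state $\bh\!\upharpoonright_{M_1}$ corresponds to the weight $x\mapsto\Tr(xD_\bullet^{-2})$ on $\B(\sH_\bullet)$. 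The modular automorphism of such a weight is conjugation by $D_\bullet^{-2\ii t}$, which, after pulling back via $\pi_\bullet$, gives the first formula.

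The only genuine subtlety is in the modular-group case: one must verify both that the modular group preserves the summand $M_1$ (furnished by Lemma~\ref{actonM}) and that the restriction of $\bh$ to $M_1$ is exactly $\Tr(\cdot\,D_\bullet^{-2})$ with the correct sign convention for the resulting KMS flow; this is precisely the content of the Plancherel description of $\sigma_t^{\bh}$ in \cite{DesmedtPhd,modular}. For the scaling-group formula the main input is Corollary~\ref{zeStw1}, whose derivation in the preceding sections has already absorbed the delicate work.
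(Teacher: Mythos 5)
Your proof is correct, and the two halves should be judged separately. For the scaling group you argue exactly as the paper does: Lemma~\ref{lem5} (with $\Pi_{\GG}=\id$ by co-amenability) identifies the $M_1$-component of $\cQ_\ll\pi_\bullet^{-1}(a)\cQ_\ll^*$ with $a\tens\I_{\Bar{\sH_\bullet}}$, and Corollary~\ref{zeStw1} does the rest. For the modular group, however, you take a genuinely different route. The paper simply quotes from the forthcoming reference \cite{modular} the explicit Plancherel-picture formula for $\cQ_\ll\nabla_{\bh}^{\ii t}\cQ_\ll^*$ and conjugates; you instead derive the restriction of $\sigma^{\bh}$ to $M_1$ intrinsically, from the facts that modular groups of faithful normal states decompose over central direct sums and that the modular group of the weight $\Tr(\,\cdot\,D_\bullet^{-2})$ on $\B(\sH_\bullet)$ is conjugation by $D_\bullet^{-2\ii t}$. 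What your version buys is independence from the unpublished \cite{modular}; what it costs is two standard but unquoted inputs. The first (decomposition of the modular group over $M=M_1\oplus M_2$) is immediate from uniqueness of the KMS flow. The second is the identification $\bh\!\upharpoonright_{M_1}=\Tr(\,\cdot\,D_\bullet^{-2})$: the Plancherel formula in Section~\ref{typeI} is stated only for positive elements of the \cst-algebra $\C(\GG)$, not for general positive elements of the von Neumann summand $M_1$, so you should either invoke the von Neumann-algebraic form of Desmedt's theorem or note that $\cQ_\ll\sK\cQ_\ll^*$ lands inside $M_1$ (since $\ss_\lambda$ kills $\cK$) and is $\sigma$-weakly dense there, so the formula extends by normality. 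With that sentence added your argument is complete and, arguably, more self-contained than the paper's.
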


\begin{proof}
% By \cite[Theorem 3]{modular} we have % wersja z maila z 14.04.2020, w wersji z maila z 29.04.2020 to jest Theorem 5.4
It will be shown in \cite{modular} that the modular operator for $\bh$ transported via the unitary $\cQ_{\ll}$ acts as follows:
\[
\cQ_\ll\nabla_{\bh}^{\ii{t}}\cQ_{\ll}^*
=\bigl(D_\bullet^{-2\ii{t}}\tens(D_\bullet^{2\ii{t}})^{\tt}\bigr)\oplus
\int_{\TT}^{\oplus}\bigl(D_\lambda^{-2\ii{t}}\tens(D_\lambda^{2\ii{t}})^{\tt}\bigr)\,d\mu(\lambda).
\]
Take $a\in\C(\GG)$. Denoting $\ph_\lambda=\ss_\lambda\comp\pi_\bullet$ from Lemma \ref{lem5} we conclude that
\[
\cQ_{\ll}a\cQ_{\ll}^*=\bigl(\pi_\bullet(a)\tens\I_{\Bar{\sH_\bullet}}\bigr)\oplus
\int_{\TT}^{\oplus}\bigl(\ph_\lambda(a)\tens\I_{\Bar{\sH_\lambda}}\bigr)\,d\mu(\lambda),
\]
so
\begin{align*}
\bigl(\pi_\bullet\bigl(\sigma_t^{\bh}(a)\bigr)\tens\I_{\Bar{\sH_\bullet}}\bigr)\oplus&\int_{\TT}^{\oplus}\bigl(\ph_\lambda\bigl(\sigma_t^{\bh}(a)\bigr)\tens\I_{\Bar{\sH_\lambda}}\bigr)\,d\mu(\lambda)\\
&=\cQ_{\ll}\sigma_t^{\bh}(a)\cQ_{\ll}^*\\
&=\cQ_{\ll}\nabla_{\bh}^{\ii{t}}a\nabla_{\bh}^{-\ii{t}}\cQ_{\ll}^*\\
&=\bigl(D_\bullet^{-2\ii{t}}\pi_\bullet(a)D_\bullet^{2\ii{t}}\tens\I_{\Bar{\sH_\bullet}}\bigr)\oplus
\int_{\TT}^{\oplus}\bigl(\ph_\lambda(a)\tens\I_{\Bar{\sH_\lambda}}\bigr)\,d\mu(\lambda)
\end{align*}
(representations $\{\ph_\lambda\}$ are one-dimensional) and consequently
\[
\pi_\bullet\bigl(\sigma_t^{\bh}(\pi_\bullet^{-1}(a))\bigr)
=D_\bullet^{-2\ii{t}}aD_\bullet^{2\ii{t}},\qquad{a}\in\cT,\;t\in\RR.
\]

The second part of the lemma is proved analogously using Corollary \ref{zeStw1}.
\end{proof}

\begin{lemma}\label{lemShift}
For any $\alpha\in\Irr(\GG)$, $i\in\{1,\dotsc,n_\alpha\}$ and $q\in\spec(D_\bullet^{-1})$ the operator $\pi_\bullet(U^\alpha_{i,i})$ shifts the eigenspaces of $D_\bullet^{-1}$ as follows:
\[
\pi_\bullet(U^\alpha_{i,i})\sH_\bullet(D_\bullet^{-1}=q)\subset
\sH_\bullet\bigl(D_\bullet^{-1}=q\uprho_{\alpha,i}\bigr).
\]
\end{lemma}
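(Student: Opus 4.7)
The plan is to use Lemma \ref{prev}, which converts $\sigma_t^{\bh}$ on $\cT$ into conjugation by $D_\bullet^{-2\ii{t}}$, and to combine it with the standard formula for the modular group of the Haar state of a compact quantum group on matrix coefficients of irreducible corepresentations. In essence, a scalar multiplication identity for $\sigma_t^{\bh}$ on $U^\alpha_{i,i}$ will translate into a commutation relation between $D_\bullet^{-2\ii{t}}$ and $\pi_\bullet(U^\alpha_{i,i})$, and then pointwise reading of this relation on an eigenvector of $D_\bullet^{-1}$ gives the shift.

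First I would recall (from \cite[Theorem 1.7.4]{NTbook} or \cite[Theorem 5.6]{pseudogroups}) that
\[
\sigma_t^{\bh}(U^\alpha_{i,j})=(f_{\ii{t}}\tens\id\tens f_{\ii{t}})\bigl((\Delta\tens\id)\Delta(U^\alpha_{i,j})\bigr).
\]
Since the chosen basis of $\sH_\alpha$ diagonalizes $\uprho_\alpha$, the Woronowicz characters satisfy $f_{\ii{t}}(U^\alpha_{k,l})=\delta_{k,l}\uprho_{\alpha,k}^{\ii{t}}$, and the formula collapses to $\sigma_t^{\bh}(U^\alpha_{i,j})=\uprho_{\alpha,i}^{\ii{t}}\uprho_{\alpha,j}^{\ii{t}}U^\alpha_{i,j}$. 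In particular, for $i=j$,
\[
\sigma_t^{\bh}(U^\alpha_{i,i})=\uprho_{\alpha,i}^{2\ii{t}}U^\alpha_{i,i},\qquad t\in\RR.
\]

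Next I would apply $\pi_\bullet$ to both sides and use the first identity of Lemma \ref{prev} to arrive at the commutation relation
\[
D_\bullet^{-2\ii{t}}\pi_\bullet(U^\alpha_{i,i})D_\bullet^{2\ii{t}}=\uprho_{\alpha,i}^{2\ii{t}}\pi_\bullet(U^\alpha_{i,i}),\qquad t\in\RR,
\]
or, rearranged, $D_\bullet^{-2\ii{t}}\pi_\bullet(U^\alpha_{i,i})=\uprho_{\alpha,i}^{2\ii{t}}\pi_\bullet(U^\alpha_{i,i})D_\bullet^{-2\ii{t}}$.

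Finally, for $\xi\in\sH_\bullet(D_\bullet^{-1}=q)$ we have $D_\bullet^{-2\ii{t}}\xi=q^{2\ii{t}}\xi$ by functional calculus, so applying the rearranged identity to $\xi$ yields
\[
D_\bullet^{-2\ii{t}}\bigl(\pi_\bullet(U^\alpha_{i,i})\xi\bigr)=(q\uprho_{\alpha,i})^{2\ii{t}}\pi_\bullet(U^\alpha_{i,i})\xi,\qquad t\in\RR.
\]
Since $D_\bullet^{-1}$ is a strictly positive self-adjoint operator, the validity of this identity for all $t\in\RR$ forces $\pi_\bullet(U^\alpha_{i,i})\xi$ to belong to $\sH_\bullet(D_\bullet^{-1}=q\uprho_{\alpha,i})$, by uniqueness of the spectral resolution. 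This gives the claimed inclusion. The only delicate point in the argument is bookkeeping of signs and factors of $2$ in the modular formula and in Lemma \ref{prev}; there is no substantial obstacle once these are lined up.
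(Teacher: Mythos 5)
Your argument is correct and follows essentially the same route as the paper's: both invoke Lemma \ref{prev} together with the standard formula $\sigma_t^{\bh}(U^\alpha_{i,i})=\uprho_{\alpha,i}^{2\ii t}U^\alpha_{i,i}$ in the $\uprho_\alpha$-diagonalizing basis, and then read off the eigenvalue of $D_\bullet^{-1}$ on $\pi_\bullet(U^\alpha_{i,i})\xi$ from the resulting one-parameter commutation relation (the paper merely substitutes $-t/2$ for $t$ so as to work with $D_\bullet^{\ii t}\,\cdot\,D_\bullet^{-\ii t}$ directly). The signs and factors of $2$ line up with the paper's computation, so there is nothing to fix.
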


\begin{proof}
For $t\in\RR$ and $\xi\in\sH_\bullet(D_\bullet^{-1}=q)$, by Lemma \ref{prev} we have
\[
D_\bullet^{\ii{t}}\pi_\bullet(U^\alpha_{i,i})\xi
=D_\bullet^{\ii{t}}\pi_\bullet(U^\alpha_{i,i})D_\bullet^{-\ii{t}}D_\bullet^{\ii{t}}\xi
=\pi_\bullet\bigl(\sigma_{-t/2}^{\bh}(U^\alpha_{i,i})\bigr)q^{-\ii{t}}\xi=q^{-\ii{t}}\uprho_{\alpha,i}^{-\ii{t}}\pi_\bullet(U^\alpha_{i,i})\xi
\]
(cf.~e.g.~\cite[Section 1.7]{NTbook}) which means that $\pi_\bullet(U^\alpha_{i,i})\xi\in\sH_\bullet\bigl(D_\bullet^{-1}=q\uprho_{\alpha,i}\bigr)$.
\end{proof}

Clearly for any $q\in\spec(D_\bullet^{-1})$ the operator $\bchi(D_\bullet^{-1}=q)B\bchi(D_\bullet^{-1}=q)$ is bounded and positive. We let $\Delta_q$ denote its spectrum:
\[
\Delta_q=\spec\bigl(\bchi(D_\bullet^{-1}=q)B\bchi(D_\bullet^{-1}=q)\bigr).
\]

\begin{lemma}
For any $\alpha\in\Irr(\GG)$, $i\in\{1,\dotsc,n_\alpha\}$, $q\in\spec(D_\bullet^{-1})$ and $c\in\Delta_q$ we have
\begin{equation}\label{zaw}
\pi_\bullet(U^\alpha_{i,i})
\sH_\bullet\bigl(\bchi(D_\bullet^{-1}=q)B\bchi(D_\bullet^{-1}=q)=c\bigr)\subset
\sH_\bullet\bigl(\bchi(D_\bullet^{-1}=q\uprho_{\alpha,i})B\bchi(D_\bullet^{-1}=q\uprho_{\alpha,i})=c\bigr)
\end{equation}
\end{lemma}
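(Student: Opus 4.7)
The plan is to combine Lemma \ref{lemShift} with the observation that $\pi_\bullet(U^\alpha_{i,i})$ commutes with the one-parameter group $(B_t)_{t\in\RR}$. Once both ingredients are in place the inclusion is almost immediate.

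First I would record the behavior of $U^\alpha_{i,i}$ under the scaling group. Since the fixed basis of $\sH_\alpha$ diagonalizes $\uprho_\alpha$, the standard formula for the scaling group on matrix coefficients (cf.\ \cite[Section 1.7]{NTbook}) gives $\tau_t(U^\alpha_{i,j})=\uprho_{\alpha,i}^{\ii{t}}\uprho_{\alpha,j}^{-\ii{t}}U^\alpha_{i,j}$, so in particular $\tau_t(U^\alpha_{i,i})=U^\alpha_{i,i}$ for every $t\in\RR$. Applying the second formula of Lemma \ref{prev} with $a=\pi_\bullet(U^\alpha_{i,i})$ therefore yields $\pi_\bullet(U^\alpha_{i,i})=B_{-t}\pi_\bullet(U^\alpha_{i,i})B_t$, i.e.\ $B_t$ commutes with $\pi_\bullet(U^\alpha_{i,i})$ for every $t$. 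Since $B$ is (the exponential of) the generator of this group, $B$ itself commutes with $\pi_\bullet(U^\alpha_{i,i})$ on the appropriate domain.

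Next I would unpack the eigenspace on the left of \eqref{zaw}. By Corollary \ref{DBD}, $B$ commutes with each spectral projection $\bchi(D_\bullet^{-1}=q)$, so the operator $\bchi(D_\bullet^{-1}=q)B\bchi(D_\bullet^{-1}=q)$ acts as $B$ on $\sH_\bullet(D_\bullet^{-1}=q)$ and as $0$ on its orthogonal complement. Because $B$ is strictly positive, any $c\in\Delta_q\setminus\{0\}$ gives
\[
\sH_\bullet\bigl(\bchi(D_\bullet^{-1}=q)B\bchi(D_\bullet^{-1}=q)=c\bigr)
=\sH_\bullet(D_\bullet^{-1}=q)\cap\sH_\bullet(B=c),
\]
and for any $\xi$ in this intersection we have $\xi\in\sH_\bullet(D_\bullet^{-1}=q)$ and $B\xi=c\xi$.

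Finally I would apply the two preceding ingredients to such a $\xi$. By Lemma \ref{lemShift}, $\pi_\bullet(U^\alpha_{i,i})\xi\in\sH_\bullet(D_\bullet^{-1}=q\uprho_{\alpha,i})$; and because $B$ commutes with $\pi_\bullet(U^\alpha_{i,i})$,
\[
B\bigl(\pi_\bullet(U^\alpha_{i,i})\xi\bigr)=\pi_\bullet(U^\alpha_{i,i})B\xi=c\,\pi_\bullet(U^\alpha_{i,i})\xi.
\]
Hence $\pi_\bullet(U^\alpha_{i,i})\xi$ lies in $\sH_\bullet(D_\bullet^{-1}=q\uprho_{\alpha,i})\cap\sH_\bullet(B=c)$, which by the same identification equals the right-hand side of \eqref{zaw}. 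The case $q\uprho_{\alpha,i}\notin\spec(D_\bullet^{-1})$ is automatic: then $\pi_\bullet(U^\alpha_{i,i})\xi=0$ by Lemma \ref{lemShift} and the inclusion holds trivially.

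The only subtlety, and hence the ``hard step'' in the above plan, is getting $B$ to commute with $\pi_\bullet(U^\alpha_{i,i})$ in the rigorous sense needed to commute $B$ past $\pi_\bullet(U^\alpha_{i,i})$ on a vector in the spectral subspace $\sH_\bullet(B=c)$; this is a standard consequence of the commutation $B_t\pi_\bullet(U^\alpha_{i,i})=\pi_\bullet(U^\alpha_{i,i})B_t$ for all $t\in\RR$ via functional calculus, so no real difficulty arises.
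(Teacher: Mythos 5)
Your proof is correct and follows essentially the same route as the paper: invariance of $U^\alpha_{i,i}$ under the scaling group plus Lemma \ref{prev} gives $B_t\pi_\bullet(U^\alpha_{i,i})=\pi_\bullet(U^\alpha_{i,i})B_t$, and combining this with Lemma \ref{lemShift} and Corollary \ref{DBD} yields the inclusion, with domain issues dismissed by restricting to the finite-dimensional eigenspaces of $D_\bullet^{-1}$. Your explicit identification of the eigenspace as $\sH_\bullet(D_\bullet^{-1}=q)\cap\sH_\bullet(B=c)$ is a harmless elaboration of what the paper leaves implicit.
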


\begin{proof}
Fix $t\in\RR$. Since $U^\alpha_{i,i}$ is invariant for the scaling group (\cite[Section 1.7]{NTbook}), from Lemma \ref{prev} we know that $B_t\pi_\bullet(U^\alpha_{i,i})B_{-t}=\pi_\bullet\bigl(\tau_{-t}(U^\alpha_{i,i})\bigr)=\pi_\bullet(U^\alpha_{i,i})$. Therefore if
\[
\xi\in\sH_\bullet\bigl(\bchi(D_\bullet^{-1}=q)B\bchi(D_\bullet^{-1}=q)=c\bigr)
\]
then
\[
B\pi_\bullet(U^\alpha_{i,i})\xi=B\pi_\bullet(U^\alpha_{i,i})B^{-1}B\xi=c\pi_\bullet(U^\alpha_{i,i})\xi
\]
(cf.~Corollary \ref{DBD}) and \eqref{zaw} follows (note that there are no domain issues because we are restricting to finite-dimensional eigenspaces of $D_\bullet^{-1}$ for the eigenvalues $q$ and $q\uprho_{\alpha,i}$).
\end{proof}

\begin{theorem}\label{thm}
The set
\begin{equation}\label{Deltaq}
\bigcup\limits_{q\in\spec(D_\bullet^{-1})}\Delta_q
\end{equation}
is finite.
\end{theorem}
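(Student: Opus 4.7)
My plan is to bound $|\bigcup_q\Delta_q|$ by the dimension of the kernel of a carefully chosen diagonal matrix coefficient $\pi_\bullet(U^\alpha_{i,i})$, exploiting the Fredholm property established in Section~\ref{first}. The guiding principle is that every $c$ in the union will contribute a non-zero subspace to this kernel, and these subspaces, being eigenspaces of the self-adjoint operator $B$ for distinct eigenvalues, are pairwise orthogonal. Since the Fredholm kernel is finite-dimensional, the count of $c$'s must be finite.

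The first ingredient is to select $\alpha\in\Irr(\GG)$ and an index $i$ such that $\uprho:=\uprho_{\alpha,i}>1$. Such a choice is available because $\GG$ is not of Kac type (Subsection~\ref{notKac}), so some $\uprho_\alpha\neq\I$; the standard identity $\Tr(\uprho_\alpha)=\Tr(\uprho_\alpha^{-1})$ rewrites as $\sum_i(\uprho_{\alpha,i}-\uprho_{\alpha,i}^{-1})=0$, which together with the fact that not all summands vanish forces at least one $\uprho_{\alpha,i}>1$.

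Next, for each $c\in\bigcup_q\Delta_q$ I would consider the set $\{q\in\spec(D_\bullet^{-1}):V_{q,c}\neq\{0\}\}$, where $V_{q,c}:=\sH_\bullet\bigl(\bchi(D_\bullet^{-1}=q)B\bchi(D_\bullet^{-1}=q)=c\bigr)$. Compactness of $D_\bullet^{-1}$ places $\spec(D_\bullet^{-1})$ inside $[0,\|D_\bullet^{-1}\|]$ with no accumulation point other than $0$, so this bounded non-empty set has an actual maximum $q_c^{\text{top}}$, at which $V_{q_c^{\text{top}},c}$ is non-zero by construction. Because $\uprho>1$, we have $q_c^{\text{top}}\uprho>q_c^{\text{top}}$, so by maximality $V_{q_c^{\text{top}}\uprho,c}=\{0\}$. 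Applying the inclusion~\eqref{zaw} of the preceding lemma then yields $\pi_\bullet(U^\alpha_{i,i})V_{q_c^{\text{top}},c}\subset V_{q_c^{\text{top}}\uprho,c}=\{0\}$, that is $V_{q_c^{\text{top}},c}\subset\ker\pi_\bullet(U^\alpha_{i,i})$.

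To conclude, the non-zero subspaces $V_{q_c^{\text{top}},c}$ are pairwise orthogonal as $c$ varies (they sit in distinct eigenspaces of the self-adjoint operator $B$), and they all live in $\ker\pi_\bullet(U^\alpha_{i,i})$, which is finite-dimensional by Fredholmness. Hence $|\bigcup_q\Delta_q|\leq\dim\ker\pi_\bullet(U^\alpha_{i,i})<\infty$. I expect the main subtlety to lie in securing the two key inputs simultaneously — an eigenvalue $\uprho>1$ (so that the shift strictly increases $q$) and the attainment of the maximum $q_c^{\text{top}}$ (which is where compactness of $D_\bullet^{-1}$ enters) — because only in their combined presence does $\pi_\bullet(U^\alpha_{i,i})$ annihilate $V_{q_c^{\text{top}},c}$; once both are in place the argument closes immediately.
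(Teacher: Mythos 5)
Your proof is correct, and while it relies on exactly the same ingredients as the paper's argument --- the choice of $\alpha$ and $i$ with $\uprho_{\alpha,i}>1$ (available since $\GG$ is not of Kac type and $\Tr(\uprho_\alpha)=\Tr(\uprho_\alpha^{-1})$), the Fredholm property of $\pi_\bullet(U^\alpha_{i,i})$, and the inclusion \eqref{zaw} --- the endgame is organized differently and, to my mind, more cleanly. The paper argues by contradiction: it uses finite-dimensionality of $\ker\pi_\bullet(U^\alpha_{i,i})$ together with \eqref{decomkerpiU} to produce a threshold $q_0$ below which $\pi_\bullet(U^\alpha_{i,i})$ is injective on each eigenspace $\sH_\bullet(D_\bullet^{-1}=q)$, then takes a hypothetical ``extra'' eigenvalue $\tilde c$ living below the threshold and iterates $\pi_\bullet(U^\alpha_{i,i})$ until the orbit $\tilde q\uprho_{\alpha,i}^k$ crosses $q_0$, at which point the image must vanish, contradicting injectivity. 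You instead run the same shift mechanism top-down and directly: for each $c$ in the union, compactness of $D_\bullet^{-1}$ guarantees that $\{q:V_{q,c}\neq\{0\}\}$ attains its maximum $q_c^{\text{top}}$ (a point the paper never needs to isolate), and then \eqref{zaw} forces $V_{q_c^{\text{top}},c}\subset\ker\pi_\bullet(U^\alpha_{i,i})$; since these nonzero subspaces are pairwise orthogonal for distinct $c$ (they lie in distinct eigenspaces of the positive operator $B$, using Corollary \ref{DBD} to identify them with genuine $B$-eigenspaces inside the finite-dimensional $B$-invariant spaces $\sH_\bullet(D_\bullet^{-1}=q)$), you get the explicit quantitative bound $\bigl|\bigcup_q\Delta_q\bigr|\leq\dim\ker\pi_\bullet(U^\alpha_{i,i})$, which the paper's contradiction argument does not exhibit. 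All the steps check out: $q_c^{\text{top}}\uprho_{\alpha,i}>q_c^{\text{top}}$ gives $V_{q_c^{\text{top}}\uprho_{\alpha,i},c}=\{0\}$ whether or not $q_c^{\text{top}}\uprho_{\alpha,i}$ is an eigenvalue of $D_\bullet^{-1}$ (by the paper's convention for eigenspace notation), and the attainment of the maximum is exactly where Hilbert--Schmidtness of $D_\bullet^{-1}$ enters, as you say.
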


\begin{proof}
First let us choose $\alpha\in\Irr(\GG)$ and $i\in\{1,\dotsc,n_\alpha\}$ such that $\uprho_{\alpha,i}>1$ (this is possible because $\GG$ is not of Kac type and $\Tr(\rho_\alpha)=\Tr(\rho_\alpha^{-1})$).

We have the decomposition of $\sH_\bullet$ into eigenspaces of the positive compact operator $D_\bullet^{-1}$:
\[
\sH_\bullet=\bigoplus_{q\in\spec(D_\bullet^{-1})}\sH_\bullet(D_\bullet^{-1}=q).
\]
Consider $\eta\in\ker\pi_\bullet(U^\alpha_{i,i})$ with decomposition
\[
\eta=\sum\limits_{q\in\spec(D_\bullet^{-1})}\eta_q
\]
with $\eta_q\in\sH_\bullet(D_\bullet^{-1}=q)$. Now
\[
0=\pi_\bullet(U^\alpha_{i,i})\eta=\sum_{q\in\spec(D_\bullet^{-1})}\pi_\bullet(U^\alpha_{i,i})\eta_q
\]
and by Lemma \ref{lemShift} each summand is orthogonal to the remaining ones. It follows that $\pi_\bullet(U^\alpha_{i,i})\eta_q=0$ for all $q$ and consequently
\begin{equation}\label{decomkerpiU}
\ker\pi_\bullet(U^\alpha_{i,i})=
\bigoplus_{q\in\spec(D_\bullet^{-1})}\ker\pi_\bullet(U^\alpha_{i,i})\cap\sH_\bullet(D_\bullet^{-1}=q).
\end{equation}
But $\pi_\bullet(U^\alpha_{i,i})$ is a Fredholm operator, so its kernel is finite-dimensional. In particular (since the summands on the right hand side of \eqref{decomkerpiU} are pairwise orthogonal) there exists $q_0$ in $\spec(D_\bullet^{-1})$ such that $\pi_\bullet(U^\alpha_{i,i})$ is injective on $\sH_\bullet(D_\bullet^{-1}=q)$ for all $q\in\spec(D_\bullet^{-1})$ such that $q<q_0$.

Clearly, since there are only finitely many eigenvalues of $D_\bullet^{-1}$ grater than $q_0$ and each is of finite multiplicity, the set
\begin{equation}\label{Deltaq0}
\bigcup_{\substack{q\in\spec(D_\bullet^{-1})\\q\geq{q_0}}}\Delta_q
\end{equation}
is finite. Therefore, if \eqref{Deltaq} is infinite, there exists $\tilde{c}>0$ such that
\[
\tilde{c}\in\Delta_{\tilde{q}}=\spec\bigl(\bchi(D_\bullet^{-1}=\tilde{q})B\bchi(D_\bullet^{-1}=\tilde{q})\bigr)
\]
for some $\tilde{q}<q_0$ and $\tilde{c}$ does not belong to \eqref{Deltaq0}.

Consider now a unit vector $\xi\in\sH_\bullet\bigl(\bchi(D_\bullet^{-1}=\tilde{q})B\bchi(D_\bullet^{-1}=\tilde{q})\bigr)$. For $k\in\ZZ_+$ we have
\[
\pi_\bullet(U^\alpha_{i,i})^k\xi\in\sH_\bullet\bigl(\bchi(D_\bullet^{-1}=\tilde{q}\uprho_{\alpha,i}^{k})B\bchi(D_\bullet^{-1}=\tilde{q}\uprho_{\alpha,i}^{k})=\tilde{c}\bigr).
\]
As $k$ increases $\tilde{q}\uprho_{\alpha,i}^{k}$ tends to infinity, so we let $\tilde{k}=\max\bigl\{k\in\ZZ_+\,\bigr|\bigl.\,\tilde{q}\uprho_{\alpha,i}^{k}<q_0\bigr\}$.

We have
\begin{align*}
\pi_\bullet(U^\alpha_{i,i})\bigl(\pi_\bullet(U^\alpha_{i,i})^{\tilde{k}}\xi\bigr)
&=\pi_\bullet(U^\alpha_{i,i})^{\tilde{k}+1}\xi\\
&\in\sH_\bullet\bigl(\bchi(D_\bullet^{-1}=\tilde{q}\uprho_{\alpha,i}^{(\tilde{k}+1)})B\bchi(D_\bullet^{-1}=\tilde{q}\uprho_{\alpha,i}^{(\tilde{k}+1)})=\tilde{c}\bigr).
\end{align*}
But the latter subspace is $\{0\}$ because $\tilde{q}\uprho_{\alpha,i}^{\tilde{k}}\geq{q_0}$ and $\tilde{c}$ is not in the spectrum of $\bchi(D_\bullet^{-1}=q)B\bchi(D_\bullet^{-1}=q)$ for $q\geq{q_0}$. This contradicts injectivity of $\pi_\bullet(U^\alpha_{i,i})$ on $\sH_\bullet(D_\bullet^{-1}=\tilde{q}\uprho_{\alpha,i}^{\tilde{k}})$, since
\[
\pi_\bullet(U^\alpha_{i,i})^{\tilde{k}}\xi\in\sH_\bullet\bigl(\bchi(D_\bullet^{-1}=\tilde{q}\uprho_{\alpha,i}^{\tilde{k}})B\bchi(D_\bullet^{-1}=\tilde{q}\uprho_{\alpha,i}^{\tilde{k}})=\tilde{c}\bigr)
\subset\sH_\bullet(D_\bullet^{-1}=\tilde{q}\uprho_{\alpha,i}^{\tilde{k}})
\]
because $B$ preserves the decomposition of $\sH_\bullet$ into eigenspaces of $D_\bullet^{-1}$ (Corollary \ref{DBD}).
\end{proof}

\begin{corollary}\label{final}
The quantum group $\GG$ is of Kac type.
\end{corollary}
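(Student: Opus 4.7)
The plan is to argue by contradiction, assuming $\GG$ is not of Kac type, and to use Theorem~\ref{thm} to derive that the scaling group of $\GG$ has only finitely many eigenvalues, which I will then show forces all Woronowicz matrices $\uprho_\alpha$ to be trivial.

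First I would combine Theorem~\ref{thm} with Corollary~\ref{DBD} to observe that $\spec(B)$ is finite, so $B = \sum_{k=1}^n c_k P_k$ for finitely many $c_k > 0$ and orthogonal spectral projections $P_k$. By Lemma~\ref{prev}, the scaling group acts on $\cT$ via conjugation by $B_t = B^{\ii t}$. Decomposing any $a \in \cT$ into its $(k,l)$-blocks $P_k a P_l$, the $\tau_t$-eigenvalues on $\cT$ are seen to lie in the finite set $R = \{c_l/c_k : 1 \leq k, l \leq n\}$. Restricting to $\Pol(\GG)$, where $\tau_t(U^\alpha_{i,j}) = (\uprho_{\alpha,i}/\uprho_{\alpha,j})^{\ii t} U^\alpha_{i,j}$ and each $U^\alpha_{i,j}$ is non-zero by Peter--Weyl orthogonality, the set
\[
\Lambda := \bigl\{\uprho_{\alpha,i}/\uprho_{\alpha,j} : \alpha \in \Irr(\GG),\ 1 \leq i, j \leq n_\alpha\bigr\}
\]
is contained in $R$ and hence finite.

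The core of the argument is to show that $\Lambda$ is a subgroup of the multiplicative group $\RR_{>0}$. It visibly contains $1$ (take $i = j$) and is closed under inversion (swap $i$ and $j$). For closure under multiplication, given $r_1 = \uprho_{\alpha,i_1}/\uprho_{\alpha,j_1}$ and $r_2 = \uprho_{\beta,i_2}/\uprho_{\beta,j_2}$, I would decompose $\alpha \tens \beta$ into irreducibles and locate a summand $\gamma$ on whose isotypic component both basis vectors $e_{i_1} \tens e_{i_2}$ and $e_{j_1} \tens e_{j_2}$ have non-zero projection; since these are eigenvectors of $\uprho_\alpha \tens \uprho_\beta$ with eigenvalues $\uprho_{\alpha,i_1}\uprho_{\beta,i_2}$ and $\uprho_{\alpha,j_1}\uprho_{\beta,j_2}$, both eigenvalues would then belong to $\spec(\uprho_\gamma)$, giving $r_1 r_2 \in \Lambda$. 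Verifying the existence of such a common $\gamma$ is the main obstacle; an alternative that sidesteps it is to invoke Arveson--Connes spectrum theory for the prime $\cst$-algebra $\cT$, which guarantees that the spectrum of $\tau$ on $\cT$ is a closed subgroup of $\RR$.

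Once $\Lambda$ is known to be a finite subgroup of the torsion-free group $(\RR_{>0},\cdot)$, it must be trivial, so $\uprho_{\alpha,i} = \uprho_{\alpha,j}$ for every $\alpha, i, j$. Hence each $\uprho_\alpha$ is scalar, and the identity $\Tr(\uprho_\alpha) = \Tr(\uprho_\alpha^{-1})$ then forces $\uprho_\alpha = \I$. Therefore $\GG$ is of Kac type, which together with Subsection~\ref{notKac} produces the contradiction sought by the paper.
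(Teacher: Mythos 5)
Your first half is essentially the paper's own argument. Theorem~\ref{thm} together with Corollary~\ref{DBD} makes $B$ (and $B^{-1}$) bounded with finite spectrum, and Lemma~\ref{prev} identifies $\tau_t$ with conjugation by $B^{\ii t}$; your block decomposition then correctly places every ratio $\uprho_{\alpha,i}/\uprho_{\alpha,j}$ in the finite set $R=\{c_l/c_k\}$. At that point you are already done: the paper simply notes that $\tau_{-\ii}$ extends to the bounded map $a\mapsto B^{-1}\pi_\bullet(a)B$, deduces the uniform bound $\uprho_{\alpha,i}\uprho_{\alpha,j}^{-1}\leq\|B\|\|B^{-1}\|$, and invokes the criterion (remarks after Example 1.7.10 in \cite{NTbook}) that uniform boundedness of these ratios over all of $\Irr(\GG)$ forces Kac type. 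Your inclusion $\Lambda\subseteq R$ delivers exactly that bound, so no contradiction argument and no triviality of $\Lambda$ are needed.

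The second half, where you try to upgrade finiteness of $\Lambda$ to $\Lambda=\{1\}$ by showing $\Lambda$ is a subgroup of $\RR_{>0}$, is where the genuine gap lies, and you flag it yourself: there is in general no single irreducible $\gamma\subseteq\alpha\tens\beta$ whose isotypic projection is non-zero on both $e_{i_1}\tens e_{i_2}$ and $e_{j_1}\tens e_{j_2}$, so closure under multiplication does not follow this way. The proposed fallback is also unsound: the Arveson spectrum of an $\RR$-action on a prime --- even simple --- \cst-algebra need not be a subgroup of $\RR$ (the inner action $t\mapsto\mathrm{Ad}\bigl(\mathrm{diag}(1,e^{\ii t})\bigr)$ on $M_2(\CC)$ has Arveson spectrum $\{-1,0,1\}$); only the Connes spectrum is automatically a group, and it is not the set that controls your eigenvalue ratios. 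The standard repair --- which is in substance the proof of the criterion the paper cites --- is to work with $S=\bigcup_{\alpha}\spec(\uprho_\alpha)$ instead of $\Lambda$: since $\uprho_{\alpha\tens\beta}=\uprho_\alpha\tens\uprho_\beta$ and intertwiners intertwine the $\uprho$'s, every eigenvalue of $\uprho_\alpha\tens\uprho_\beta$ is an eigenvalue of $\uprho_\gamma$ for \emph{some} irreducible constituent $\gamma$ (no common constituent required), so $S$ is a subgroup of $\RR_{>0}$; and the bound $\uprho_{\alpha,i}/\uprho_{\alpha,j}\leq C$ combined with $\Tr(\uprho_\alpha)=\Tr(\uprho_\alpha^{-1})$ confines $S$ to $[1/C,C]$, whence $S=\{1\}$ and all $\uprho_\alpha=\I$.
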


\begin{proof}
Recall from Corollary \ref{DBD} that
\begin{align*}
B&=\bigoplus_{q\in\spec(D_\bullet^{-1})}\bchi(D_\bullet^{-1}=q)B\bchi(D_\bullet^{-1}=q)\\
&=\bigoplus_{q\in\spec(D_\bullet^{-1})}
\bigoplus_{c\in\Delta_q}c\,\bchi\bigl(\bchi(D_\bullet^{-1}=q)B\bchi(D_\bullet^{-1}=q)=c\bigr).
\end{align*}
Therefore Theorem \ref{thm} implies that $B$ and $B^{-1}$ are bounded with
\[
\|B\|=\sup_{q\in\spec(D_\bullet^{-1})}\sup_{c\in\Delta_q}c<+\infty\quad\text{and}\quad
\|B^{-1}\|=\sup_{q\in\spec(D_\bullet^{-1})}\sup_{c\in\Delta_q}c^{-1}<+\infty.
\]
By Lemma \ref{prev} for any $a\in\C(\GG)$
\[
\pi_\bullet\bigl(\tau_t(a)\bigr)=B_{-t}\pi_\bullet(a)B_t=B^{-\ii{t}}\pi_\bullet(a)B^{\ii{t}},\qquad{t}\in\RR,
\]
so for $a\in\Pol(\GG)$ the holomorphic continuation of the function $t\mapsto\pi_\bullet\bigl(\tau_t(a)\bigr)$ (cf.~\cite[Theorem 2.6]{cqg}, \cite[Page 32]{NTbook}) to $t=-\ii$ is given by $\pi_\bullet\bigl(\tau_{-\ii}(a)\bigr)=B^{-1}\pi_\bullet(a)B$ and hence
\[
\bigl\|\pi_\bullet\bigl(\tau_{-\ii}(a)\bigr)\bigr\|\leq\|B^{-1}\|\|a\|\|B\|.
\]

Thus for any $\alpha\in\Irr(\GG)$ and $i,j\in\{1,\dotsc,n_\alpha\}$
\[
\uprho_{\alpha,i}\uprho_{\alpha,j}^{-1}\|U^\alpha_{i,j}\|
=\|\uprho_{\alpha,i}\uprho_{\alpha,j}^{-1}U^\alpha_{i,j}\|
=\bigl\|\tau_{-\ii}(U^\alpha_{i,j})\bigr\|\leq\|B^{-1}\|\|U^\alpha_{i,j}\|\|B\|,
\]
so that
\[
\uprho_{\alpha,i}\uprho_{\alpha,j}^{-1}\leq\|B^{-1}\|\|B\|
\]
which implies that $\GG$ is a compact quantum of Kac type (cf.~\cite[Remarks after Example 1.7.10]{NTbook}).
\end{proof}

As we already mentioned in the introduction, the assumption that there is a compact quantum group $\GG$ such that the \cst-algebra $\C(\GG)$ is isomorphic to $\C(\UU)\cong\cT$ leads to the contradiction between the relatively easy conclusion that $\GG$ cannot be of Kac type (Section \ref{first}) and the conclusion of Corollary \ref{final} that $\GG$ is of Kac type. It follows that no such compact quantum group exists.

\subsection*{Acknowledgements}\hspace*{\fill}

Research presented in this paper was partially supported by the Polish National Agency for the Academic Exchange, Polonium grant PPN/BIL/2018/1/00197 as well as by the FWO–PAS project VS02619N: von Neumann algebras arising from quantum symmetries. The first author was partially supported by the NCN (National Centre of Science) grant 2014/14/E/ST1/00525.

% \bibliography{qdisk}{}
% \bibliographystyle{plain}

\end{document}